\documentclass[12pt,preprint]{elsarticle}
\usepackage[margin=1in]{geometry}
\usepackage{amsmath, amssymb, amsthm}
\usepackage{xcolor}
\usepackage{comment}
\usepackage{graphicx}
\usepackage{booktabs}
\usepackage{float}
\usepackage{listings}
\usepackage{url}
\usepackage{cleveref}

\definecolor{codegreen}{rgb}{0,0.6,0}
\definecolor{codegray}{rgb}{0.5,0.5,0.5}
\definecolor{codepurple}{rgb}{0.58,0,0.82}
\definecolor{backcolour}{rgb}{0.95,0.95,0.92}

\lstdefinestyle{mystyle}{
    backgroundcolor=\color{backcolour},   
    commentstyle=\color{codegreen},
    keywordstyle=\color{blue},
    numberstyle=\tiny\color{codegray},
    stringstyle=\color{codepurple},
    basicstyle=\ttfamily\footnotesize,
    breakatwhitespace=false,         
    breaklines=true,                 
    captionpos=b,                    
    keepspaces=true,                 
    numbers=none,       
    numbersep=5pt,                  
    showspaces=false,                
    showstringspaces=false,
    showtabs=false,                  
    tabsize=2,
    language=Matlab
}
\newtheorem{theorem}{Theorem}
\newtheorem{remark}{Remark}

\begin{document}
\begin{frontmatter}

\title{A Convex Splitting Spectral Method for the Phase Field 
Crystal Equation: Energy Stability, Computational Stability 
Maps, and Three-Dimensional GPU Simulations}

\author[nmt]{Saulo Orizaga}
\ead{saulo.orizaga@nmt.edu}

\address[nmt]{Department of Mathematics, New Mexico Institute of 
Mining and Technology, 801 Leroy Place, Socorro, NM 87801, USA}

\author[utep]{Peimeng Yin}
\ead{pyin@utep.edu}
\address[utep]{Department of Mathematical Sciences, 
The University of Texas at El Paso, 
500 W University Ave,
El Paso, TX 79968, USA}

\author[nmt2]{Deep Choudhuri}
\ead{deep.choudhuri@nmt.edu}
\address[nmt2]{Department of Materials and Metallurgical Engineering, 
New Mexico Institute of Mining and Technology, 
801 Leroy Place, Socorro, NM 87801, USA}

\begin{abstract}
We present an efficient Fourier spectral method based on the 
convex splitting framework of Eyre~\cite{eyre1998unconditionally} 
for the phase field crystal (PFC) equation. The proposed 
first-order scheme is unconditionally energy stable 
and conserves mass to machine precision. An energy stability theorem is established 
using a truncated potential argument, and a semi-analytical 
neutral stability curve is derived from a dominant-mode 
energy balance, providing a closed-form characterization 
of the practical stability boundary. The classical sufficient 
condition $a \geq 2$ is shown to be conservative: a 
computational stability map obtained from 40,000 
GPU-accelerated PFC simulations reveals that energy-stable 
solutions persist for values of $a$ significantly below 
this threshold. Crucially, accuracy analysis demonstrates that smaller 
values of $a$ within the stable region consistently yield lower $L^2$ 
errors. This high-fidelity regime is rigorously verified through an 
extended asymptotic stress test consisting of a long-time three-dimensional 
simulation on a $256^3$ grid up to $T_f = 10,000$, successfully executing 
$10^6$ continuous temporal increments deep within the relaxed stability regime, 
well below the classical convex splitting limit ($a < 2$), while preserving strict 
monotonic energy dissipation and machine-precision mass conservation. Finally, 
two-dimensional and three-dimensional simulations at resolutions up to $512^3$ 
are performed on a single consumer GPU, demonstrating the scalability 
of the proposed framework for resolving complex phase-field dynamics without 
requiring HPC infrastructure. The code is made publicly available on GitHub.\footnotemark
\end{abstract}

\begin{keyword}
Phase field crystal equation \sep 
convex splitting \sep 
energy stability \sep 
GPU computing \sep 
spectral methods
\end{keyword}

\end{frontmatter}

\footnotetext{GitHub: \url{https://github.com/sauloorizaga/CS-PFC}.}

\section{Introduction}

The phase field crystal (PFC) equation is a sixth-order 
nonlinear parabolic PDE that effectively captures 
microstructure evolution at atomic length and diffusive 
time scales~\cite{Elder,ElderGrant2004}. This model bridges 
the gap between atomistic simulations and classical phase 
field models \cite{cahn1958free}, finding applications in grain growth, 
spinodal decomposition, liquid phase epitaxial growth, 
and crack propagation~\cite{Elder,ElderGrant2004,
gomez2012unconditionally,wise2009unconditionally}. 
The PFC equation takes the form
\begin{equation*}
\phi_t = \Delta\mu, \qquad 
\mu = (\Delta+1)^2\phi - r\phi + \phi^3,
\end{equation*}
where $\phi$ is the atomistic density field and 
$r > 0$ is the undercooling parameter.
The PFC equation belongs to a broader class of 
phase-field models capable of producing rich 
microphase morphologies, including lamellar, 
hexagonal, and spherical structures. Similar 
morphological diversity is observed in block 
copolymer (BCP) models~\cite{OhtaKawasaki1986,
BatesFredrickson1990,bcpOrizaga,Orizagasys,
Matsen1994,Choksi2003}, where the competition 
between short-range and long-range interactions 
governs the emergent microstructure.

Numerical methods for the PFC equation 
have been extensively studied. Early contributions 
include the spectral approach of Cheng and 
Warren~\cite{ChengWarren2008} and the 
unconditionally stable finite difference 
framework of Wise, Wang and 
Lowengrub~\cite{wise2009unconditionally}. 
Finite element methods were proposed 
in~\cite{Backofen2007}, while isogeometric 
analysis frameworks were developed 
in~\cite{gomez2012unconditionally,Vignal2015}. 
Splitting schemes and energy-stable time integrators 
for phase-field models have been studied extensively 
in the literature~\cite{CaloMinev2020,Vignal2017,
DiegelSharma2023}. Higher-order temporal schemes 
including BDF2--CS were established by Glasner and 
Orizaga~\cite{OrizagaJCP}, which demonstrated 
that convex splitting provides a robust and 
accurate framework for both the Cahn--Hilliard 
and PFC equations in two spatial dimensions. 
GPU-accelerated pseudo-spectral methods for phase-field equations were pioneered by Shen and collaborators~\cite{ShenGPU}, whose framework directly inspired the GPU implementation of~\cite{Orizaga2024GPU} for the Cahn--Hilliard equation and is extended here to three-dimensional PFC simulations on consumer hardware.

More recently, significant advances in 
energy-stable discretizations for the PFC 
equation have been reported. 
Shin, Lee and Lee~\cite{ShinLeeLee2020} developed 
high-order convex splitting Runge--Kutta methods 
using Fourier spectral discretization, establishing 
unconditional energy stability and unique solvability 
for second and third-order temporal schemes, with 
long-time simulations restricted to two spatial 
dimensions. Yang and Han~\cite{YangHan2017} proposed 
linearly first- and second-order unconditionally 
energy stable schemes for the PFC model. 
Gomez and Nogueira~\cite{gomez2012unconditionally} 
presented an unconditionally energy-stable 
second-order method using isogeometric analysis, 
with three-dimensional validation at $N=128^3$ 
resolution. Most recently, Saylor, Horv\'ath and 
Sharma~\cite{SAYLOR2026241} proposed a 
first-order convex splitting hybridizable/embedded 
discontinuous Galerkin (HDG/EDG) finite element 
method for the PFC equation, establishing 
unconditional energy stability and existence and 
uniqueness of the discrete solution. While 
theoretically rigorous, that approach requires 
rewriting the sixth-order equation as a system 
of seven coupled first-order equations, four 
stabilization parameters, and large-scale finite 
element libraries such as MFEM, PETSc, and MUMPS. 
Notably, three-dimensional simulations could be expensive.
More recently, the scalar auxiliary variable (SAV) 
approaches~\cite{ShenXuYang2018,ShenXuYang2019} 
have provided an alternative framework for 
unconditionally stable discretizations of 
gradient flow equations. While significant 
progress has been made in these directions, 
the present work returns to the classical 
convex splitting framework and addresses 
the long-standing question of the practical 
stability threshold for the splitting parameter $a$, which will be specified in \Cref{convexsplitting}.

These observations directly motivate the present 
contribution. We present a simple and efficient 
Fourier spectral method based on the convex 
splitting framework of Eyre~\cite{eyre1998unconditionally}. 
The proposed method 
is unconditionally energy stable and conserves 
mass to machine precision. A central contribution 
of this work is a computational stability analysis 
of the splitting parameter $a$, performed via 
GPU-accelerated parameter sweeps involving 40,000 
independent PFC simulations. Our results reveal 
that the classical stability threshold $a \geq 2$ 
is sufficient but not sharp: a semi-analytical 
neutral stability curve derived from a dominant-mode 
energy balance provides a closed-form characterization 
of the practical stability boundary, and accuracy 
analysis demonstrates that smaller values of $a$ 
within the stable region consistently yield lower 
$L^2$ errors. The spectral framework scales naturally 
to three dimensions via GPU acceleration, with 
fully resolved simulations demonstrated on grids 
of $128^3$, $256^3$, and $512^3$, the latter 
on a single consumer GPU, requiring no 
HPC infrastructure. Two-dimensional simulations 
on $256 \times 256$ grids are also 
presented. To the best of our knowledge, spectral 
simulations of the PFC equation at $512^3$ 
resolution have not been previously reported 
in the literature.

The remainder of this paper is organized as follows.
Section~2 presents the mathematical model.
Section~3 introduces the convex splitting scheme and its 
spectral discretization. Section~4 establishes unconditional 
energy stability, mass conservation, and uniqueness of the 
discrete solution. Section~5 presents the computational 
stability analysis, including the semi-analytical neutral 
stability curve and the GPU-accelerated stability map. 
Section~6 presents the accuracy analysis for multiple 
initial conditions. Section~7 presents two-dimensional 
simulations including crystal growth, morphological diversity, 
and grain boundary dynamics. Section~8 presents 
three-dimensional GPU-accelerated simulations at resolutions 
up to $512^3$. Conclusions are drawn in Section~9.


\subsection{Comparison with Hybridizable/Embedded 
Discontinuous Galerkin Methods}

The recent work of Saylor, Horv\'{a}th and 
Sharma~\cite{SAYLOR2026241} and the present contribution 
share a common foundation: both employ a first-order convex 
splitting temporal discretization for the PFC equation with 
unconditional energy stability. Despite this shared framework, 
the two approaches differ substantially in complexity and 
scalability. Saylor et al.~\cite{SAYLOR2026241} treat the 
nonlinear term implicitly via Newton iterations and rewrite 
the sixth-order equation as a system of seven coupled equations, 
requiring large-scale libraries such as MFEM, PETSc, and 
MUMPS, with three-dimensional simulations not demonstrated. 
The present spectral scheme treats the nonlinear term explicitly, operates directly on the sixth-order equation, requires no external libraries, and scales naturally to three dimensions via GPU acceleration at resolutions up to $512^3$
on a single consumer GPU. The two approaches are therefore complementary: while the HDG/EDG framework of~\cite{SAYLOR2026241} offers rigorous theoretical foundations and flexibility for complex geometries and boundary conditions, the proposed spectral method prioritizes simplicity, GPU-nativity, and accessibility on consumer hardware without HPC infrastructure.

\section{Mathematical Model}
The phase field crystal (PFC) equation derives from 
the gradient flow in $H^{-1}$ of the Swift--Hohenberg 
free energy functional
\begin{equation}\label{eq:energy_PFC}
E[\phi] = \int_\Omega \left( \frac{1}{4}\phi^4 
+ \frac{1-r}{2}\phi^2 - |\nabla\phi|^2 
+ \frac{1}{2}(\Delta\phi)^2 \right)dx,
\end{equation}
where $\phi$ is the atomistic density field and 
$r > 0$ is the undercooling parameter. The 
governing equation reads
\begin{equation}\label{eq:PFC}
\phi_t = \Delta\mu, \qquad 
\mu = \phi^3 + (1-r)\phi + 2\Delta\phi + \Delta^2\phi.
\end{equation}
Under suitable boundary conditions (e.g., homogeneous Dirichlet, homogeneous Neumann, or periodic boundary conditions), the PFC equation admits the following energy dissipation law:
\[
\frac{dE}{dt} = -\|\nabla\mu\|^2 \leq 0.
\]

\section{The Convex Splitting Scheme}\label{convexsplitting}

Following Eyre~\cite{eyre1998unconditionally} and~\cite{OrizagaJCP}, we decompose 
the free energy $E = E^+ + E^-$ where
\begin{equation}\label{eq:energy_split}
E^+(\phi) = \int_\Omega \frac{1-r+a}{2}\phi^2 
+ \frac{1}{2}(\Delta\phi)^2 \,dx, \qquad
E^-(\phi) = \int_\Omega \frac{1}{4}\phi^4 
- \frac{a}{2}\phi^2 - |\nabla\phi|^2 \,dx,
\end{equation}
where $a > 0$ is the splitting parameter. 
The term $\frac{1}{4}\phi^4 - \frac{a}{2}\phi^2$ 
is concave provided $a \geq 3\|\phi\|^2_{L^\infty}$, 
which holds in practice for $a \geq 2$~\cite{OrizagaJCP}.

Treating $E^+$ implicitly and $E^-$ explicitly 
yields the first-order convex splitting scheme:
\begin{equation}\label{eq:CS}
\frac{\phi^{n+1} - \phi^n}{\Delta t} = \Delta\mu^{n+1},
\end{equation}
\begin{equation}\label{eq:mu}
\mu^{n+1} = (1-r+a)\phi^{n+1} + \Delta^2\phi^{n+1} 
+ (\phi^n)^3 - a\phi^n - 2\Delta\phi^n.
\end{equation}

In the Fourier spectral discretization, the scheme 
admits the efficient closed-form update
\begin{equation}\label{eq:spectral}
\hat{\phi}^{n+1}(\mathbf{k}) = 
\frac{\hat{\phi}^n + \Delta t\left(-|\mathbf{k}|^2
\widehat{\left[(\phi^n)^3 - a\phi^n\right]} 
+ 2|\mathbf{k}|^4\hat{\phi}^n\right)}
{1 + \Delta t\left((1-r+a)|\mathbf{k}|^2 
+ |\mathbf{k}|^6\right)},
\end{equation}
where $\hat{\cdot}$ denotes the discrete Fourier transform. Standard pseudo-spectral methods are used for the spatial discretization~\cite{trefethen2000spectral}, 
requiring no linear solvers or iterative methods.

\section{Scheme Properties}

\begin{theorem}[Mass Conservation]
\label{thm:mass}
The convex splitting scheme~\eqref{eq:CS} conserves 
mass exactly for all $\Delta t > 0$:
\[
\int_\Omega \phi^{n+1}\,dx 
= \int_\Omega \phi^n\,dx, 
\qquad \forall n \geq 0.
\]
\end{theorem}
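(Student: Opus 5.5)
The plan is to integrate the update \eqref{eq:CS} over $\Omega$ and show that the right-hand side vanishes. Multiplying \eqref{eq:CS} by $\Delta t$ and integrating gives
\[
\int_\Omega \phi^{n+1}\,dx - \int_\Omega \phi^n\,dx = \Delta t \int_\Omega \Delta\mu^{n+1}\,dx .
\]
So it suffices to prove $\int_\Omega \Delta\mu^{n+1}\,dx = 0$ for whatever $\mu^{n+1}$ is produced by \eqref{eq:mu}. We assume periodic boundary conditions, consistent with the Fourier discretization. By the divergence theorem, $\int_\Omega \Delta\mu^{n+1}\,dx = \int_{\partial\Omega}\nabla\mu^{n+1}\cdot \mathbf{n}\,dS$, and this boundary integral is zero by periodicity. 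The same holds under homogeneous Neumann conditions on $\mu$. The argument uses no property of $\mu^{n+1}$ beyond its regularity and boundary behavior, so it works for every $\Delta t>0$ and every $a$.

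Next, I would give the discrete version, which is what machine-precision conservation refers to. The mass is $|\Omega|\,\hat\phi(\mathbf{0})$ up to the normalization of the DFT. I would evaluate \eqref{eq:spectral} at $\mathbf{k}=\mathbf{0}$. There the numerator reduces to $\hat\phi^n(\mathbf 0)$, because the terms $-|\mathbf{k}|^2\widehat{[(\phi^n)^3-a\phi^n]}$ and $2|\mathbf{k}|^4\hat\phi^n$ both vanish. The denominator reduces to $1$. Hence $\hat\phi^{n+1}(\mathbf 0)=\hat\phi^n(\mathbf 0)$ exactly. The discrete sum $\sum_j \phi^{n+1}_j$, which equals $\hat\phi(\mathbf 0)$ times a constant, is therefore preserved. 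Induction on $n$ then gives the result for all $n\ge 0$.

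The only delicate point is making sure the $\mathbf{k}=\mathbf 0$ evaluation is legitimate. The denominator $1+\Delta t((1-r+a)|\mathbf k|^2+|\mathbf k|^6)$ is nonzero at $\mathbf 0$; in fact it equals $1$. It may be zero or negative elsewhere if $1-r+a<0$, but that does not affect the zero mode. The nonlinear term is computed pseudo-spectrally, and its aliasing errors affect only the $\mathbf{k}\neq\mathbf 0$ modes that are multiplied by $|\mathbf k|^2$. The zero mode is annihilated regardless of how accurately $\widehat{(\phi^n)^3}$ is computed. I expect no step to be a genuine obstacle. The main care is in stating the boundary conditions under which the continuous integration by parts is valid.
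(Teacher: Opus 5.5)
Your proposal is correct and matches the paper's proof: integrate \eqref{eq:CS} over $\Omega$, turn $\int_\Omega \Delta\mu^{n+1}\,dx$ into a boundary flux with the divergence theorem, and use periodicity to make that flux vanish. Your zero-mode evaluation of \eqref{eq:spectral} is the content of the paper's Spectral Verification remark after the theorem. One minor imprecision: aliasing can also perturb the computed zero mode of $\widehat{(\phi^n)^3}$, not only the $\mathbf{k}\neq\mathbf{0}$ modes, but as you note this is harmless because that mode is multiplied by $|\mathbf{k}|^2=0$.
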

\begin{proof}
Integrating~\eqref{eq:CS} over $\Omega$ and applying 
the divergence theorem with periodic boundary 
conditions:
\[
\frac{1}{\Delta t}\int_\Omega 
(\phi^{n+1} - \phi^n)\,dx 
= \int_\Omega \Delta\mu^{n+1}\,dx 
= \oint_{\partial\Omega} 
\nabla\mu^{n+1}\cdot\mathbf{n}\,ds = 0.
\]
\end{proof}

\begin{remark}[Spectral Verification]
In the Fourier spectral discretization, 
mass conservation holds to machine precision 
since the zero wavenumber mode $\hat{\phi}(\mathbf{0})$ 
is unchanged by the scheme, the right-hand side 
vanishes identically at $\mathbf{k} = \mathbf{0}$. 
This is confirmed numerically in all simulations 
presented in Section~\ref{sec:numerics}.
\end{remark}

\begin{remark}[Existence and Uniqueness]
Existence and uniqueness of the discrete solution 
follow from the strictly convex structure of the 
implicit part of the scheme~\eqref{eq:CS}, as 
established in ~\cite{OrizagaJCP} 
for both the Cahn--Hilliard and PFC equations.
\end{remark}

To establish unconditional energy stability 
for the sixth-order system, we adopt a truncated 
potential argument following the mathematical 
framework of Shen et al.~\cite{shen2010numerical}. 
This approach ensures global concavity of the 
expansive energy components, yielding the 
following result:


\begin{theorem}[Unconditional Energy Stability]
\label{thm:energy}
Let $\Omega \subset \mathbb{R}^d$ be a periodic domain and let
$\phi^n \in H^2_{\mathrm{per}}(\Omega)$.
Define a truncated potential $F_M \in C^2(\mathbb{R})$ satisfying
\[
F_M(\phi)=\frac14\phi^4
\quad \text{for } |\phi|\le M,
\]
and
\[
|F_M''(\phi)|\le L_M,
\qquad \forall \phi\in\mathbb R.
\]

Let $f_M = F_M'$ and define the modified energy functional
\[
E_M(\phi)
=
\int_\Omega
\left[
F_M(\phi)
+
\frac{1-r}{2}\phi^2
-
|\nabla\phi|^2
+
\frac12(\Delta\phi)^2
\right]dx.
\]

Decompose
\[
E_M(\phi)=E_M^+(\phi)+E_M^-(\phi),
\]
where
\[
E_M^+(\phi)
=
\int_\Omega
\left[
\frac{1-r+a}{2}\phi^2
+
\frac12(\Delta\phi)^2
\right]dx,
\]
and
\[
E_M^-(\phi)
=
\int_\Omega
\left[
F_M(\phi)
-
\frac a2\phi^2
-
|\nabla\phi|^2
\right]dx.
\]

Assume
\[
a\ge L_M,
\qquad
1-r+a>0.
\]

Consider the convex splitting scheme
\begin{equation}
\frac{\phi^{n+1}-\phi^n}{\Delta t}
=
\Delta\mu^{n+1},
\tag{2.1}
\end{equation}
with chemical potential
\begin{equation}
\mu^{n+1}
=
(1-r+a)\phi^{n+1}
+
\Delta^2\phi^{n+1}
+
f_M(\phi^n)
-
a\phi^n
-
2\Delta\phi^n.
\tag{2.2}
\end{equation}

Then the numerical solution satisfies the discrete energy law
\[
E_M(\phi^{n+1})
+
\frac1{\Delta t}
\|\phi^{n+1}-\phi^n\|_{H^{-1}}^2
\le
E_M(\phi^n),
\qquad \forall \Delta t>0.
\]
\end{theorem}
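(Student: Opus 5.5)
We follow the standard Eyre convex-splitting argument: bound $E_M^+$ from above by its tangent inequality at $\phi^{n+1}$ and $E_M^-$ from above by its tangent inequality at $\phi^n$, then use the scheme to identify the resulting inner product with an $H^{-1}$ norm. First, mass conservation (Theorem~\ref{thm:mass}, whose proof applies verbatim to the truncated scheme) gives that $\delta := \phi^{n+1}-\phi^n$ has zero mean. Hence $\|\delta\|_{H^{-1}}^2 = (\,(-\Delta)^{-1}\delta,\delta)$ is well defined on the periodic domain. Moreover, the scheme (2.1) can be written as $\mu^{n+1} - \overline{\mu^{n+1}} = -\frac{1}{\Delta t}(-\Delta)^{-1}\delta$, where the overline denotes the mean.

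\textbf{Convexity of $E_M^+$ and concavity of $E_M^-$.} Since $1-r+a>0$, $E_M^+$ is a sum of convex quadratic functionals, so
\[
E_M^+(\phi^{n+1}) - E_M^+(\phi^n) \le \big(\delta E_M^+(\phi^{n+1}),\delta\big)
= \big((1-r+a)\phi^{n+1}+\Delta^2\phi^{n+1},\,\delta\big).
\]
For $E_M^-$, the integrand $G(\phi)=F_M(\phi)-\frac a2\phi^2$ satisfies $G''=F_M''-a\le L_M-a\le 0$, so $G$ is concave pointwise. The term $-\int|\nabla\phi|^2$ is concave as well. Taylor's theorem with the pointwise bound on $G''$, together with the exact identity $-|\nabla\phi^{n+1}|^2+|\nabla\phi^n|^2 = -2\nabla\phi^n\cdot\nabla\delta - |\nabla\delta|^2$, gives
\[
E_M^-(\phi^{n+1}) - E_M^-(\phi^n) \le \big(f_M(\phi^n)-a\phi^n-2\Delta\phi^n,\,\delta\big),
\]
after integrating by parts periodically. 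This requires only $\phi\in H^2_{\mathrm{per}}$.

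\textbf{Conclusion.} Adding the two inequalities gives $E_M(\phi^{n+1})-E_M(\phi^n)\le(\mu^{n+1},\delta)$. Since $\delta$ has zero mean, we may replace $\mu^{n+1}$ by $\mu^{n+1}-\overline{\mu^{n+1}}$. Substituting the scheme yields $(\mu^{n+1},\delta) = -\frac1{\Delta t}((-\Delta)^{-1}\delta,\delta) = -\frac1{\Delta t}\|\delta\|^2_{H^{-1}}$, which is the claimed inequality for every $\Delta t>0$.

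The main technical point is this last step: making the $H^{-1}$ identification rigorous, specifically the zero-mean property of $\delta$ and the invertibility of $-\Delta$ on mean-zero periodic functions. We would also need $\phi^{n+1}\in H^2_{\mathrm{per}}$, or in fact $H^6$-level regularity so that the scheme makes sense. In the Fourier setting this follows from the closed-form update \eqref{eq:spectral}, adapted to $f_M$: its denominator is at least $1$, and $f_M(\phi^n)\in L^2$ because $|f_M(s)|\le |f_M(0)|+L_M|s|$. The convexity steps themselves are routine once $a\ge L_M$ is in place.
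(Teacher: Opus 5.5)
Your route is the paper's: tangent-plane inequalities for the convex $E_M^+$ at $\phi^{n+1}$ and the concave $E_M^-$ at $\phi^n$, then the $H^{-1}$ identification. Your explicit treatment of the zero mean of $\delta=\phi^{n+1}-\phi^n$ and of $\overline{\mu^{n+1}}$ is more careful than the paper's appeal to $\langle\Delta v,w\rangle_{H^{-1}}=-\langle v,w\rangle_{L^2}$, which silently needs $w=\delta$ to have zero mean.

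The problem is the sign in your $E_M^-$ step. Integrating your own identity by parts gives $-2\int_\Omega\nabla\phi^n\cdot\nabla\delta\,dx=+2(\Delta\phi^n,\delta)$. Concavity therefore yields
\[
E_M^-(\phi^{n+1})-E_M^-(\phi^n)\le\big(f_M(\phi^n)-a\phi^n+2\Delta\phi^n,\,\delta\big),
\]
that is, the first variation of $E_M^-$ is $f_M(\phi)-a\phi+2\Delta\phi$. The bound you wrote, with $-2\Delta\phi^n$, differs from this by the linear, sign-indefinite term $4(\Delta\phi^n,\delta)=-4(\nabla\phi^n,\nabla\delta)$, and it is false in general. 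Adding the correct inequalities gives $E_M(\phi^{n+1})-E_M(\phi^n)\le(\mu^{n+1}+4\Delta\phi^n,\delta)$ with $\mu^{n+1}$ as in (2.2). That leftover linear term cannot be absorbed by any quadratic dissipation when $\delta$ is small.

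The root cause is a sign typo in (2.2) (and in \eqref{eq:mu}): with $-2\Delta\phi^n$ the claimed energy law is false. Take $\phi^n=\epsilon\cos x_1$ on $[0,2\pi]^d$ with $0<\epsilon\le M$ small. To leading order in $\epsilon$ the scheme multiplies this mode by
\[
\rho=\frac{1+\Delta t(a-2)}{1+\Delta t(2-r+a)}=1-(4-r)\Delta t+O(\Delta t^2),
\]
which lies in $(0,1)$ for $0<r<4$ and small $\Delta t$. Meanwhile $E_M(A\cos x_1)=-\frac{r|\Omega|}{4}A^2+O(A^4)$, so $E_M(\phi^{n+1})-E_M(\phi^n)=\frac{r|\Omega|}{4}(1-\rho^2)\epsilon^2+O(\epsilon^4)>0$.

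The consistent scheme has $+2\Delta\phi^n$, matching $\mu$ in \eqref{eq:PFC} and the $+2|\mathbf{k}|^4\hat{\phi}^n$ term of \eqref{eq:spectral}. For that scheme your argument goes through verbatim. The paper's proof makes the identical slip, hidden in the sentence ``Using the definition of $\mu^{n+1}$ from (2.2)'', because it never writes out the first variation of $E_M^-$. You should have flagged the sign in the statement rather than forced the integration by parts to agree with it.

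A minor point: from $\phi^n\in H^2_{\mathrm{per}}$ the closed-form update gives $\phi^{n+1}\in H^4$, not $H^6$. This suffices, since then $\mu^{n+1}\in L^2$ and (2.1) holds in $H^{-2}$.
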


\begin{proof}


The second variation of $E_M^+$ is
\[
\delta^2E_M^+(\phi)[v,v]
=
\int_\Omega
\left[
(1-r+a)v^2
+
(\Delta v)^2
\right]dx.
\]

Since $1-r+a>0$,
\[
\delta^2E_M^+(\phi)[v,v]\ge0,
\]
and therefore $E_M^+$ is strictly convex.

Next, the second variation of $E_M^-$ is
\[
\delta^2E_M^-(\phi)[v,v]
=
\int_\Omega
\left[
(F_M''(\phi)-a)v^2
-
2|\nabla v|^2
\right]dx.
\]

Because
\[
F_M''(\phi)\le L_M\le a,
\]
we obtain
\[
(F_M''(\phi)-a)v^2\le0.
\]

Hence
\[
\delta^2E_M^-(\phi)[v,v]\le0,
\]
which proves that $E_M^-$ is globally concave.

\medskip


Take the $H^{-1}$ inner product of (2.1) with
$\phi^{n+1}-\phi^n$:
\[
\left\langle
\frac{\phi^{n+1}-\phi^n}{\Delta t},
\phi^{n+1}-\phi^n
\right\rangle_{H^{-1}}
=
\langle
\Delta\mu^{n+1},
\phi^{n+1}-\phi^n
\rangle_{H^{-1}}.
\]

Using periodicity and the identity
\[
\langle \Delta v,w\rangle_{H^{-1}}
=
-\langle v,w\rangle_{L^2},
\]
we obtain
\begin{equation}
\frac1{\Delta t}
\|\phi^{n+1}-\phi^n\|_{H^{-1}}^2
=
-
\langle
\mu^{n+1},
\phi^{n+1}-\phi^n
\rangle_{L^2}.
\tag{2.3}
\end{equation}

\medskip


Since $E_M^+$ is convex,
\begin{equation}
E_M^+(\phi^{n+1})
-
E_M^+(\phi^n)
\le
\left\langle
\delta E_M^+(\phi^{n+1}),
\phi^{n+1}-\phi^n
\right\rangle_{L^2}.
\tag{2.4}
\end{equation}

\medskip


Since $E_M^-$ is concave,
\begin{equation}
E_M^-(\phi^{n+1})
-
E_M^-(\phi^n)
\le
\left\langle
\delta E_M^-(\phi^n),
\phi^{n+1}-\phi^n
\right\rangle_{L^2}.
\tag{2.5}
\end{equation}

\medskip


Adding (2.4) and (2.5) yields
\[
E_M(\phi^{n+1})
-
E_M(\phi^n)
\le
\left\langle
\delta E_M^+(\phi^{n+1})
+
\delta E_M^-(\phi^n),
\phi^{n+1}-\phi^n
\right\rangle_{L^2}.
\]

Using the definition of $\mu^{n+1}$ from (2.2),
\[
E_M(\phi^{n+1})
-
E_M(\phi^n)
\le
\langle
\mu^{n+1},
\phi^{n+1}-\phi^n
\rangle_{L^2}.
\]

Substituting identity (2.3),
\[
E_M(\phi^{n+1})
-
E_M(\phi^n)
\le
-
\frac1{\Delta t}
\|\phi^{n+1}-\phi^n\|_{H^{-1}}^2.
\]

Therefore,
\[
E_M(\phi^{n+1})
+
\frac1{\Delta t}
\|\phi^{n+1}-\phi^n\|_{H^{-1}}^2
\le
E_M(\phi^n).
\]

\end{proof}

\begin{remark}
In practical computations, one typically observes
$|\phi|\le M$ \cite{OrizagaJCP,Orizaga2024GPU,ShinLeeLee2020}, so that
\[
F_M(\phi)=\frac14\phi^4
\]
throughout the simulation. Thus the truncated formulation serves only as a technical device ensuring global concavity and rigorous unconditional energy stability.
\end{remark}


\section{Computational Stability Analysis}

\subsection{Single Run: Crystal Growth from Seed}

To illustrate the dynamics of the proposed scheme, 
we first present a single simulation of the PFC 
equation initialized with the standard seed 
configuration of Glasner--Orizaga~\cite{OrizagaJCP}. 
The simulation is performed on a $256\times 256$ 
grid with $r=0.5$, $\Delta t = 0.01$, and $a=0.30$, a value notably below the classical threshold 
$a \geq 2$, demonstrating that energy-stable 
and physically correct simulations are achievable 
well below this bound. 
Figure~\ref{fig:snapshots_2D} shows snapshots of 
the atomistic density field at $t=0, 10, 20, 40$, 
demonstrating the progressive growth of a hexagonal 
crystal lattice from the initial seed. 
Figure~\ref{fig:energy_2D} shows the corresponding 
free energy as a function of time, confirming 
monotonic energy dissipation throughout the simulation.

\begin{figure}[h!]
\centering
\begin{tabular}{cc}
\includegraphics[width=0.43\textwidth]{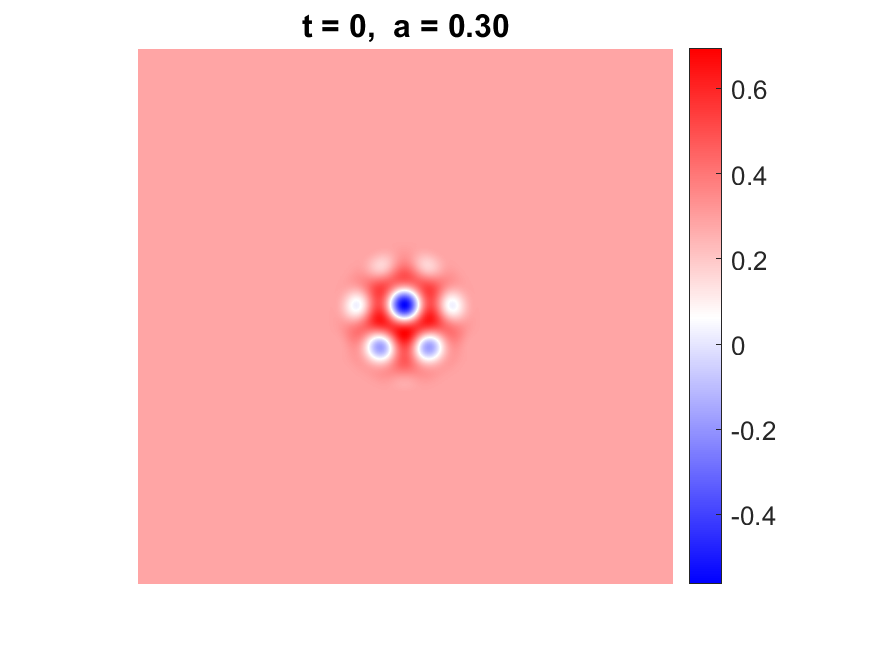} &
\includegraphics[width=0.43\textwidth]{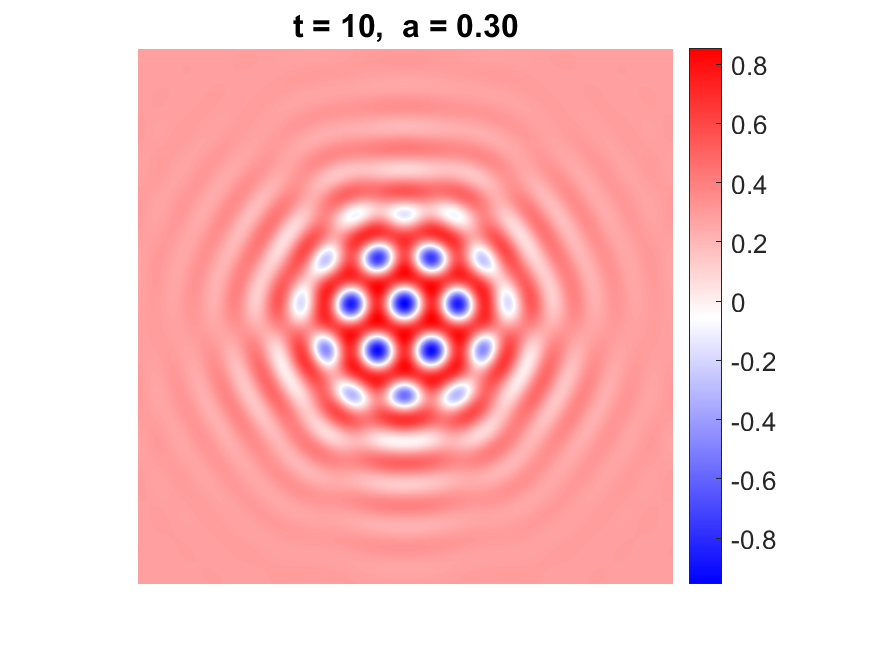} \\
\includegraphics[width=0.43\textwidth]{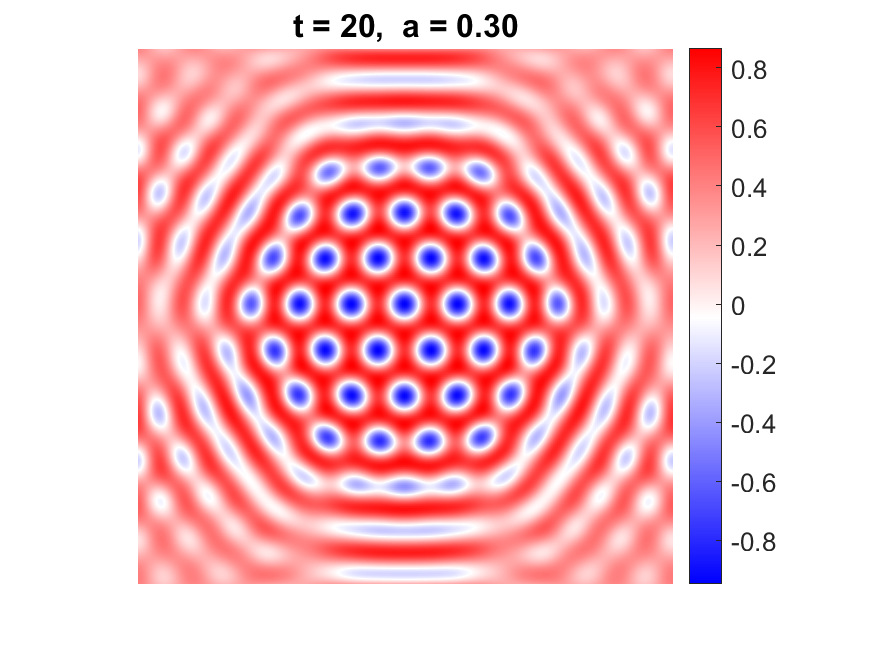} &
\includegraphics[width=0.43\textwidth]{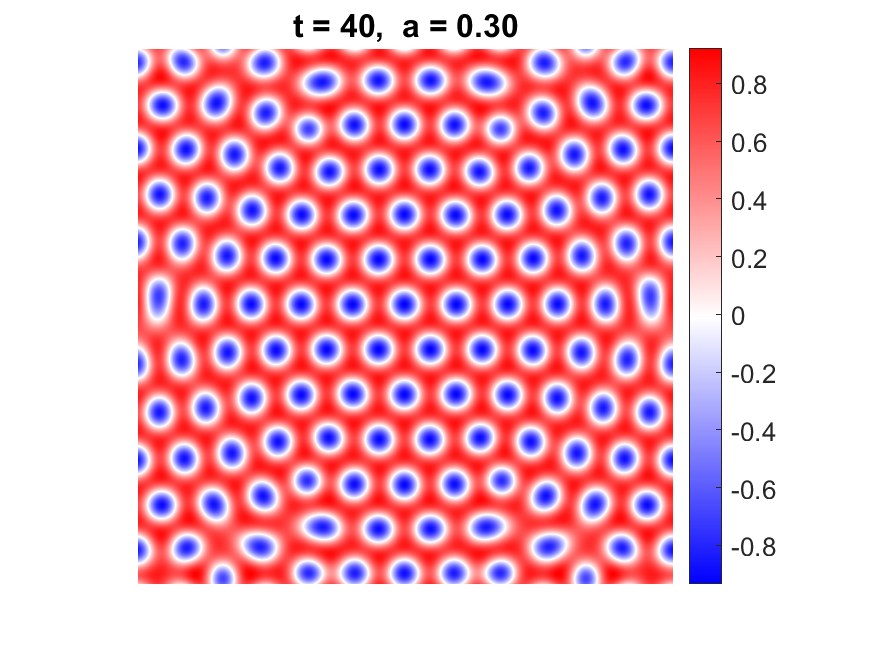}
\end{tabular}
\caption{Crystal growth simulation for the PFC equation 
using the convex splitting spectral method with $a=0.30$, 
$\Delta t = 0.01$, $r=0.5$, on a $256\times 256$ grid. 
Snapshots of the atomistic density field $\phi$ are shown 
at $t=0$ (top left), $t=10$ (top right), $t=20$ (bottom left), 
and $t=40$ (bottom right). The initial seed configuration 
of Glasner--Orizaga~\cite{OrizagaJCP} grows into a fully 
developed hexagonal crystal lattice, demonstrating the 
energy-stable and physically correct dynamics of the 
proposed scheme.}
\label{fig:snapshots_2D}
\end{figure}

\begin{figure}[h!]
\centering
\centerline{\includegraphics[width=1.25\textwidth]{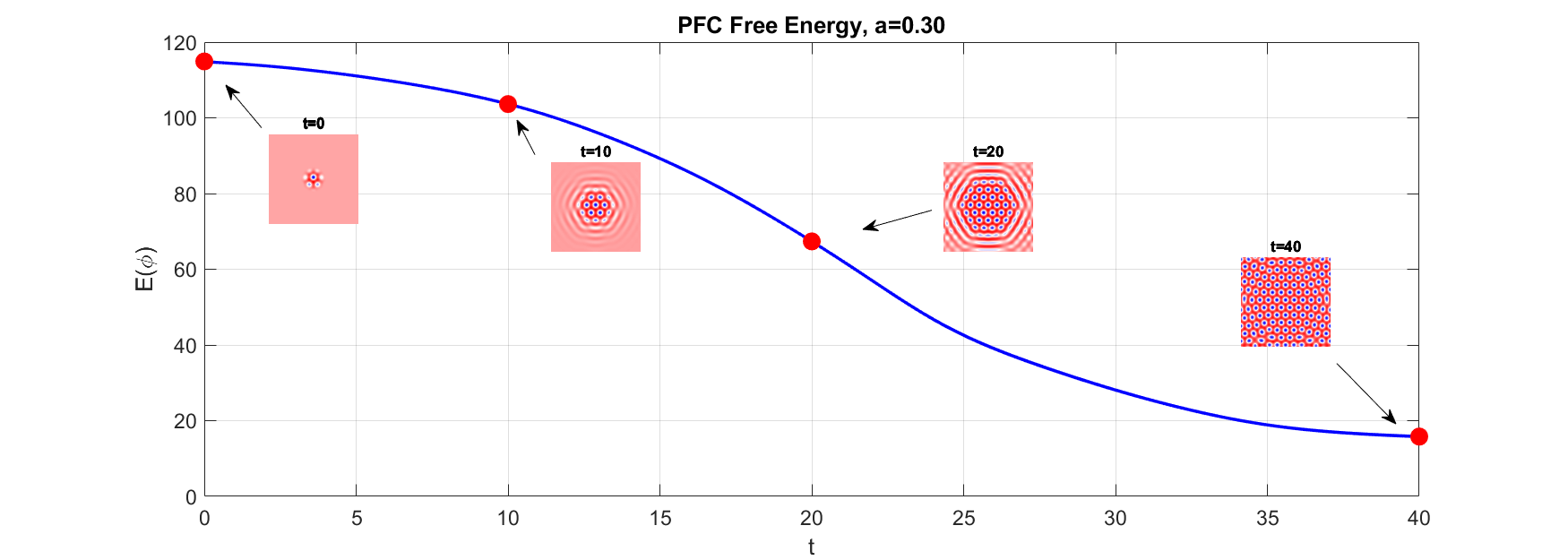}}
\caption{Free energy $E(\phi^n)$ as a function of time 
for the PFC convex splitting scheme with $a=0.30$, 
$\Delta t=0.01$, $r=0.5$, on a $256\times 256$ grid 
up to $t=40$. The energy decreases monotonically 
throughout the simulation, confirming unconditional 
energy stability. Red dots indicate the times 
$t = 0, 10, 20, 40$ at which solution snapshots 
are shown as insets, illustrating the progressive 
development of the hexagonal crystal structure 
from the initial seed configuration.}
\label{fig:energy_2D}
\end{figure}

\subsection{GPU-Accelerated Parallel Simulations}

To further demonstrate the efficiency of the proposed 
framework, we exploit GPU parallelism to execute 
twenty independent PFC simulations simultaneously, 
each with a different value of the splitting parameter 
$a \in [1.00, 2.00]$. This is motivated in part by 
recent advances in GPU-parallelized scientific 
computing, in particular, Chowdhury and 
P\'erez-R\'ios~\cite{chowdhury2026gpu} demonstrated 
the effectiveness of GPU-parallelized MATLAB for 
large-scale parameter sweeps in atom-ion dynamics, 
achieving significant speedups over CPU-based 
implementations. The present work adopts a similar 
philosophy. As shown in Figure~\ref{fig:parallel_20}, 
all twenty simulations are energy-stable, confirming 
that values of $a$ significantly below the classical 
threshold $a \geq 2$ yield physically correct solutions. 
A GPU speedup of $18.42\times$ over the sequential 
CPU baseline is observed.

\begin{figure}[h!]
\centering
\includegraphics[width=0.95\textwidth]{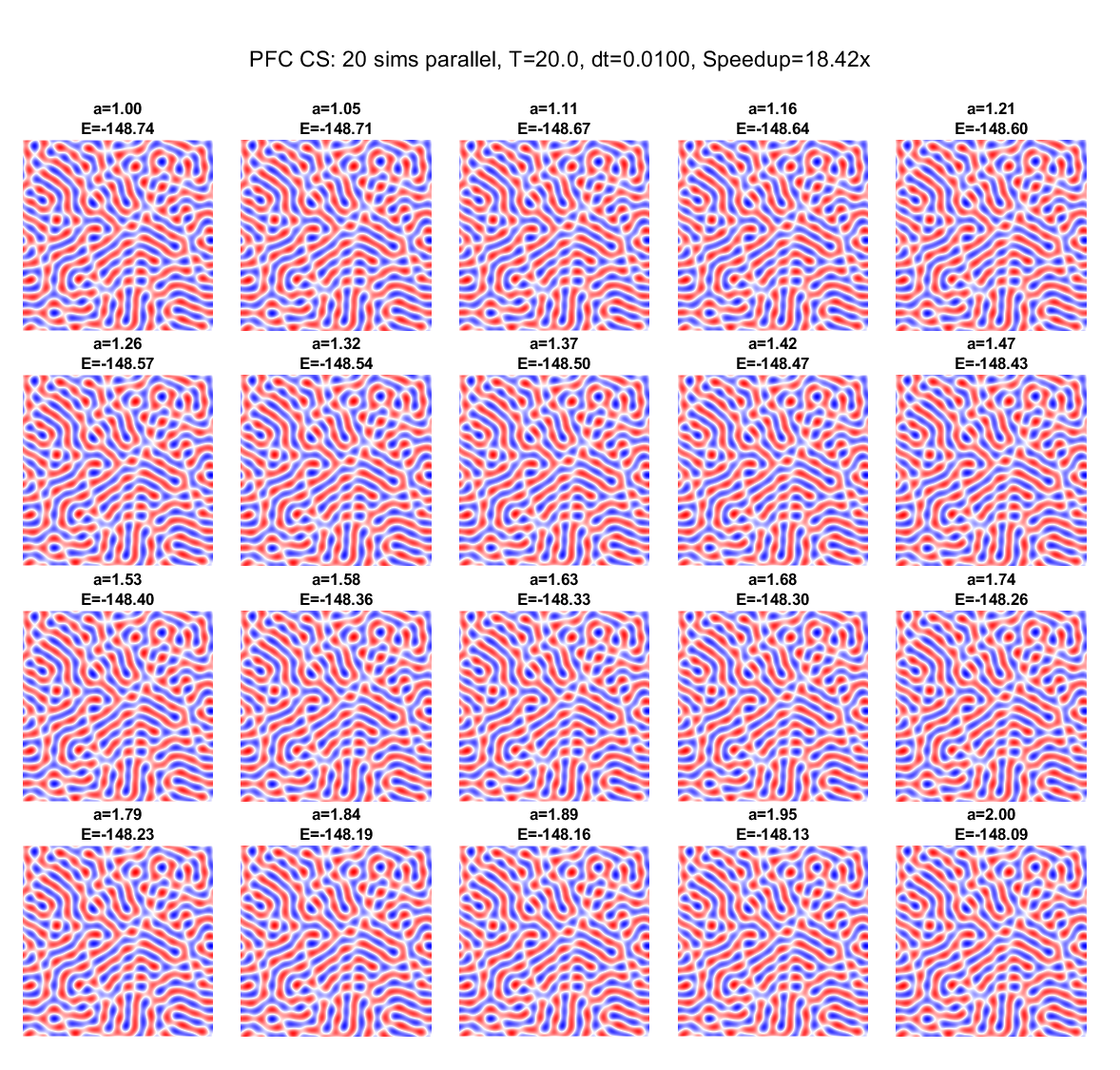}
\caption{Twenty parallel GPU-accelerated PFC simulations 
with splitting parameter $a \in [1.00, 2.00]$, 
$\Delta t = 0.01$, $r = 0.5$, $T = 20$, on a 
$256 \times 256$ grid with random initial condition 
$\phi^0 = 0.01\,\eta$, where $\eta \sim \mathcal{N}(0,1)$. 
All twenty simulations are energy-stable with 
monotonically decreasing free energy, confirming 
that values of $a$ significantly below the classical 
threshold $a \geq 2$ yield physically correct 
and energy-stable solutions. The GPU-accelerated 
parallel implementation achieves a speedup of 
$18.42\times$ over the sequential CPU baseline, 
demonstrating the efficiency of the proposed 
computational framework.}
\label{fig:parallel_20}
\end{figure}

\subsection{Neutral Stability Curve}

While Theorem~\ref{thm:energy} establishes that 
$a \geq L_M$ is sufficient for unconditional energy 
stability, this condition is not sharp in practice. 
To characterize the practical stability boundary, 
we derive a semi-analytical neutral stability curve 
from a dominant-mode energy balance.

From the discrete energy law established in 
Theorem~\ref{thm:energy}, the neutral stability 
transition occurs when $E_M(\phi^{n+1}) = E_M(\phi^n)$, 
that is, when the dissipative terms and the nonlinear 
remainder are in exact balance. To make this condition 
tractable, we invoke a dominant-mode reduction: we project 
the balance onto the dominant Fourier mode 
$|\mathbf{k}|^2 = 2$, evaluate the nonlinearity 
at its bulk-phase value $f'' = F_M''(\phi)|_{\phi=1} = 2$, 
and collect terms in $a$. This yields the quadratic 
condition
\[
a\left(1 + \Delta t\,|\mathbf{k}|^6 
+ \Delta t\,a\,|\mathbf{k}|^2\right) 
= f''\,\Delta t\,|\mathbf{k}|^2.
\]
Solving for the positive root gives the 
analytical neutral curve:
\begin{equation}\label{eq:neutral}
a_{\mathrm{neutral}}(\Delta t) =
\frac{-(1 + \Delta t\,|\mathbf{k}|^6)
+\sqrt{(1+\Delta t\,|\mathbf{k}|^6)^2
+4\,\Delta t^2\,|\mathbf{k}|^4\,f''}}
{2\,\Delta t\,|\mathbf{k}|^2},
\end{equation}
which contains no free parameters under this 
closure. As $\Delta t \to 0$, 
$a_{\mathrm{neutral}} \to 0$, consistent 
with the explicit CFL-type behavior observed 
numerically. For large $\Delta t$, 
$a_{\mathrm{neutral}}$ grows monotonically, 
reflecting increased stabilization requirements. 
The curve lies well below the classical bound 
$a \geq 2$ over the entire tested range 
$\Delta t \in [0.01, 2]$, confirming that the 
standard condition is not sharp in practice.
Any choice satisfying $a \geq a_{\mathrm{neutral}}(\Delta t)$ 
is consistently observed to yield energy-stable 
simulations, providing a practical and conservative 
guideline for parameter selection.

\subsection{GPU-Accelerated Stability Map}

To verify the neutral stability curve~\eqref{eq:neutral} 
computationally, we perform a systematic parameter 
sweep over $(a, \Delta t) \in [0, 0.4] \times [0.01, 2]$ 
using 40,000 independent GPU-accelerated PFC simulations 
on an NVIDIA RTX 5090 GPU. Each simulation is initialized 
with the standard seed configuration in~\cite{OrizagaJCP} on a $64 \times 64$ 
grid with $r = 0.5$. To ensure a fair and rigorous 
comparison across all parameter pairs, each simulation 
is advanced for a fixed number of time steps given by
\[
N_{\mathrm{steps}} = \left\lfloor \frac{T}{\Delta t_{\min}} \right\rfloor 
= \left\lfloor \frac{14}{0.01} \right\rfloor = 1400,
\]
regardless of the value of $\Delta t$. A simulation is 
classified as stable if the discrete free energy satisfies 
$E(\phi^{N_{\mathrm{steps}}}) < E(\phi^0)$, a criterion 
verified to correspond to physically meaningful solutions 
in representative cases.

Figure~\ref{fig:neutral_curve} shows the semi-analytical 
neutral stability curve~\eqref{eq:neutral} overlaid 
against the computational stability boundary extracted 
from the stability map of Figure~\ref{fig:stability_map}. 
The neutral curve lies above the computational boundary 
throughout the tested range, confirming its conservative 
but reliable character as a practical guideline for 
parameter selection. Notably, the neutral curve remains 
substantially below the classical sufficient condition 
$a \geq 2$ of Eyre~\cite{eyre1998unconditionally} 
across the entire tested range $\Delta t \in [0.01, 2]$, 
demonstrating that the classical bound is significantly 
more conservative than necessary in practice.

This approach is broadly applicable to any 
phase-field or gradient flow model requiring 
systematic parameter exploration. The 40,000 
GPU-accelerated simulations presented here 
demonstrate that computational stability maps, 
previously impractical due to cost, are now 
accessible on consumer hardware, opening new 
avenues for parameter sensitivity analysis in 
materials science, crystal growth modeling, 
and beyond.

\begin{figure}[h!]
\centering
\includegraphics[width=0.7\textwidth]{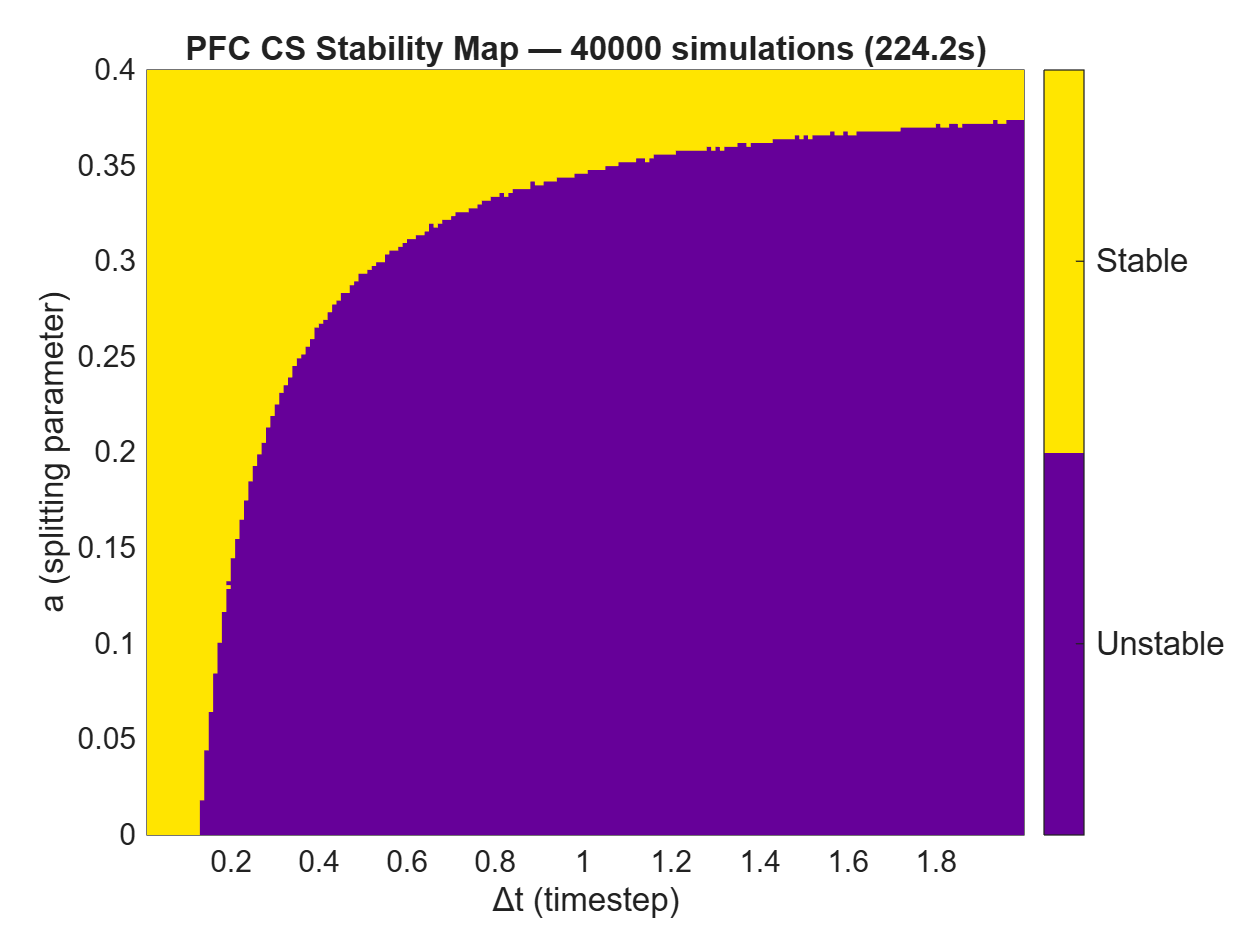}
\caption{Computational stability map for the convex 
splitting scheme applied to the PFC equation, obtained 
from 40,000 GPU-accelerated simulations on an NVIDIA 
RTX 5090 in 224.2 seconds. The parameter space 
$(a, \Delta t) \in [0, 0.4] \times [0.01, 2]$ is 
explored on a $200 \times 200$ grid. Yellow regions 
indicate energy-stable simulations where $E(\phi^n)$ 
decreases monotonically; purple regions indicate 
energy-unstable simulations. The stability boundary is well characterized by 
the semi-analytical neutral stability curve 
$a_{\mathrm{neutral}}(\Delta t)$~\eqref{eq:neutral}, 
lying well below the classical sufficient condition 
$a \geq 2$ of Eyre~\cite{eyre1998unconditionally}. 
This demonstrates that the classical bound is 
sufficient but not sharp, and that energy-stable 
simulations are achievable for values of $a$ 
significantly below this threshold.}
\label{fig:stability_map}
\end{figure}

\begin{figure}[h!]
\centering
\includegraphics[width=0.75\textwidth]{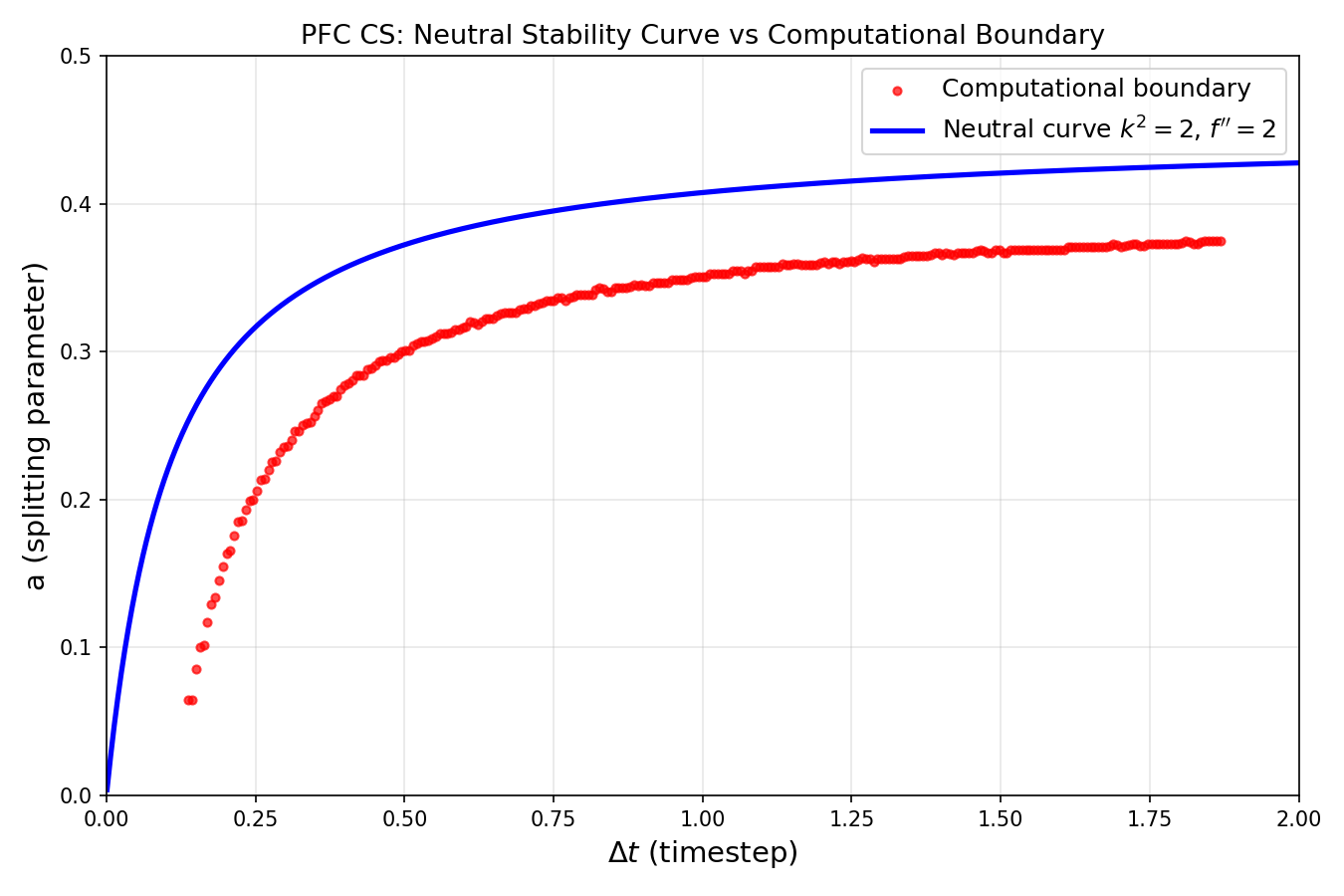}
\caption{Semi-analytical neutral stability curve 
$a_{\mathrm{neutral}}(\Delta t)$~\eqref{eq:neutral} 
(solid blue line) derived from a dominant-mode energy 
balance with $|\mathbf{k}|^2 = 2$ and $f'' = 2$, 
overlaid against the computational stability boundary 
(red dots) extracted from Figure~\ref{fig:stability_map}. 
The neutral curve lies above the computational boundary 
throughout the tested range $\Delta t \in [0.01, 2]$, 
confirming its conservative but reliable character 
as a practical stability guideline.}
\label{fig:neutral_curve}
\end{figure}

\begin{remark}[Robustness of the Stability Map]
The computational stability map presented in 
Figure~\ref{fig:stability_map} was obtained using 
the seed configuration of~\cite{OrizagaJCP}. 
Additional tests with random initial conditions 
$\phi^0 = 0.01\,\eta$, $\eta \sim \mathcal{N}(0,1)$, 
yielded near-identical stability boundaries, 
suggesting that the stability boundary is an 
intrinsic property of the CS scheme rather than 
a consequence of the choice of initial condition.
\end{remark}

\section{Accuracy Analysis: Influence of the Splitting Parameter}

The computational stability map of 
Figure~\ref{fig:stability_map} demonstrates that 
energy-stable simulations are achievable for values 
of $a$ significantly below the classical threshold 
$a \geq 2$. A natural question is whether reducing 
$a$ within the stable region affects the accuracy 
of the numerical solution. We investigate this 
systematically by computing $L^2$ errors relative 
to a reference solution obtained with $a = 2$ and 
$\Delta t = 10^{-5}$ on a $256 \times 256$ grid 
up to $T = 1$, using the standard seed initial 
condition in~\cite{OrizagaJCP}.

Figure~\ref{fig:error_plot} shows the $L^2$ error 
as a function of $\Delta t$ for five values of 
$a \in \{0.1, 0.5, 1.0, 1.5, 2.0\}$. All curves 
exhibit first-order $\mathcal{O}(\Delta t)$ convergence, 
consistent with the first-order temporal discretization. 
Crucially, smaller values of $a$ consistently yield 
lower $L^2$ errors for the same $\Delta t$. This 
demonstrates that selecting $a$ close to 
$a_{\mathrm{neutral}}(\Delta t)$, rather than 
the conservative choice $a = 2$, improves accuracy 
while maintaining energy stability, providing a 
practical guideline for parameter selection.

\begin{figure}[h!]
\centering
\includegraphics[width=0.7\textwidth]{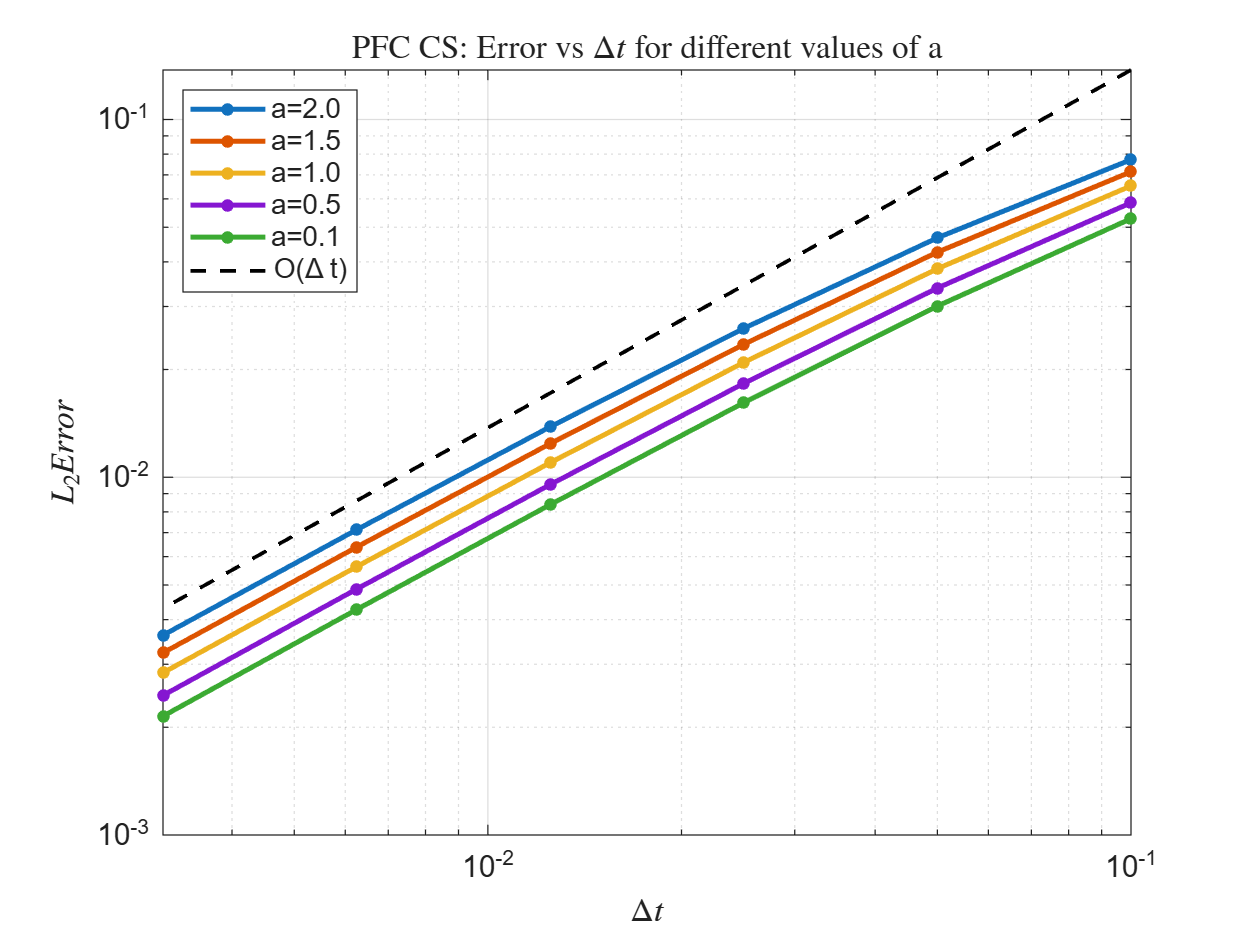}
\caption{$L^2$ error versus $\Delta t$ for the PFC 
convex splitting scheme with five values of the 
splitting parameter $a \in \{0.1, 0.5, 1.0, 1.5, 2.0\}$, 
on a $256\times 256$ grid with $r=0.5$ and $T=1$, 
using the standard seed initial condition of 
Glasner--Orizaga~\cite{OrizagaJCP}. The reference 
solution is computed with $a=2$ and $\Delta t = 10^{-5}$. 
All curves exhibit first-order $\mathcal{O}(\Delta t)$ 
convergence. Smaller values of $a$ consistently yield 
lower $L^2$ errors for the same $\Delta t$, demonstrating 
that selecting $a$ below the classical threshold $a \geq 2$ 
improves accuracy while maintaining energy stability.}
\label{fig:error_plot}
\end{figure}


\section{Numerical Simulations: 2D}
\label{sec:numerics}
\subsection{Lamellar Phase: Stripe Formation}

To further demonstrate the capabilities of the proposed 
scheme beyond the seed configuration, we present 
long-time simulations initialized with a random 
initial condition $\phi^0 = 0.01\,\eta$, 
$\eta \sim \mathcal{N}(0,1)$, on a $256\times 256$ 
grid with $r=0.5$, $\Delta t = 0.01$, and $a=0.5$. 
This configuration tests the ability of the method 
to naturally develop labyrinthine stripe patterns 
from an unstructured initial state, without any 
prescribed geometry. Figure~\ref{fig:lamellar_2D} 
shows snapshots at $t=5$, $t=40$, and $t=1000$, 
illustrating the progressive formation and 
coarsening of the lamellar microstructure. 
Figure~\ref{fig:energy_lamellar} shows the 
corresponding free energy, which decreases 
monotonically and reaches a well-defined plateau 
near $t=500$, confirming that the solution has 
reached a near-equilibrium state.

\begin{figure}[h!]
\centering
\begin{tabular}{ccc}
\includegraphics[width=0.32\textwidth]{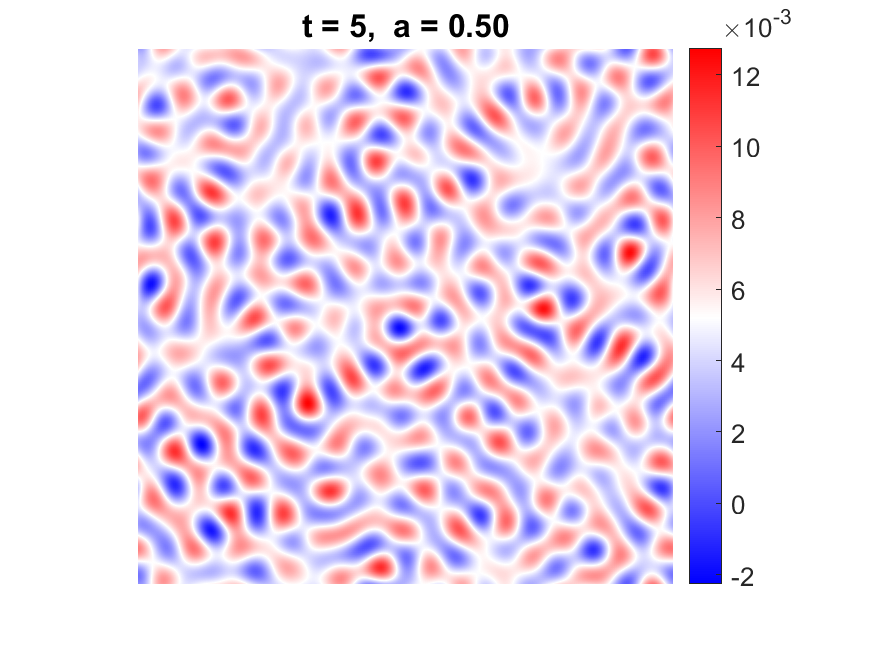} &
\includegraphics[width=0.32\textwidth]{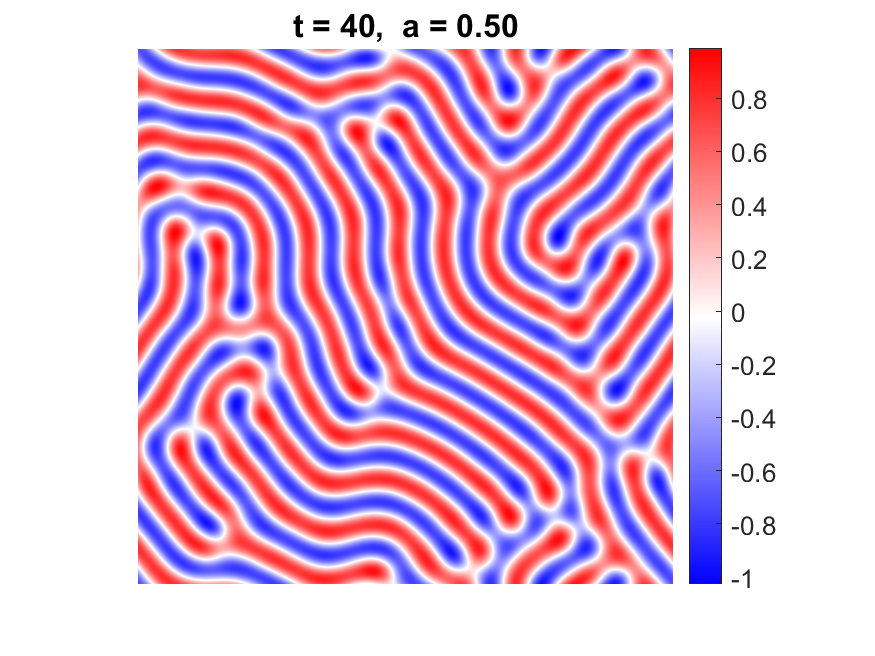} &
\includegraphics[width=0.32\textwidth]{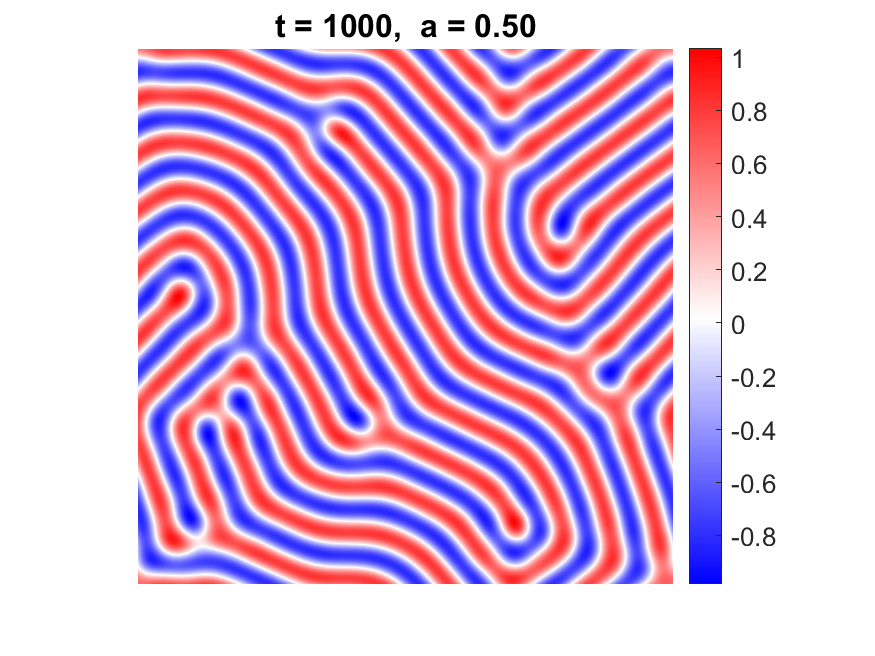}
\end{tabular}
\caption{Labyrinthine stripe pattern formation for the 
PFC equation with random initial condition 
$\phi^0 = 0.01\,\eta$, $\eta \sim \mathcal{N}(0,1)$, 
on a $256\times 256$ grid with $r=0.5$, 
$\Delta t=0.01$, $a=0.5$. Snapshots at $t=5$ (left), 
$t=40$ (center), and $t=1000$ (right) demonstrate 
the progressive development of the labyrinthine 
microstructure from an unstructured initial state 
to a well-developed near-equilibrium configuration.}
\label{fig:lamellar_2D}
\end{figure}

\begin{figure}[h!]
\centering
\includegraphics[width=0.75\textwidth]{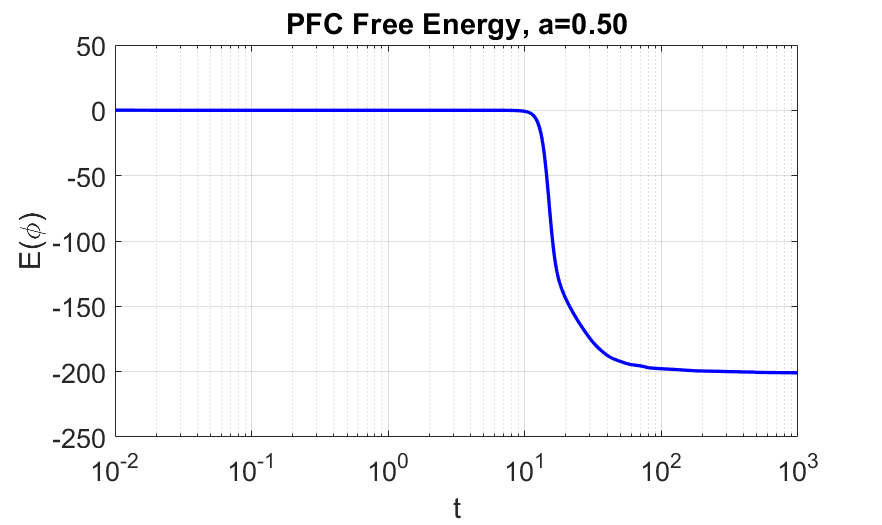}
\caption{Free energy $E(\phi^n)$ as a function of time 
for the PFC simulation of Figure~\ref{fig:lamellar_2D}, 
with random initial condition on a $256\times 256$ grid 
up to $T=1000$. The energy decreases monotonically 
and reaches a well-defined plateau near $t=500$, 
confirming near-equilibrium and unconditional 
energy stability of the proposed scheme.}
\label{fig:energy_lamellar}
\end{figure}


\subsection{Hexagonal Phase}

We next consider a mean-field initial condition 
$\phi^0 = 0.01\,\eta + 0.27$, 
$\eta \sim \mathcal{N}(0,1)$, which favors the 
formation of hexagonal dot patterns. 
Figure~\ref{fig:hex_2D} shows snapshots at 
$t=10$, $t=50$, and $t=1000$, demonstrating 
the progressive development of the hexagonal 
microstructure. Figure~\ref{fig:energy_hex} 
shows the corresponding free energy, which 
decreases monotonically throughout, consistent 
with Theorem~\ref{thm:energy}.

\begin{figure}[h!]
\centering
\begin{tabular}{ccc}
\includegraphics[width=0.32\textwidth]{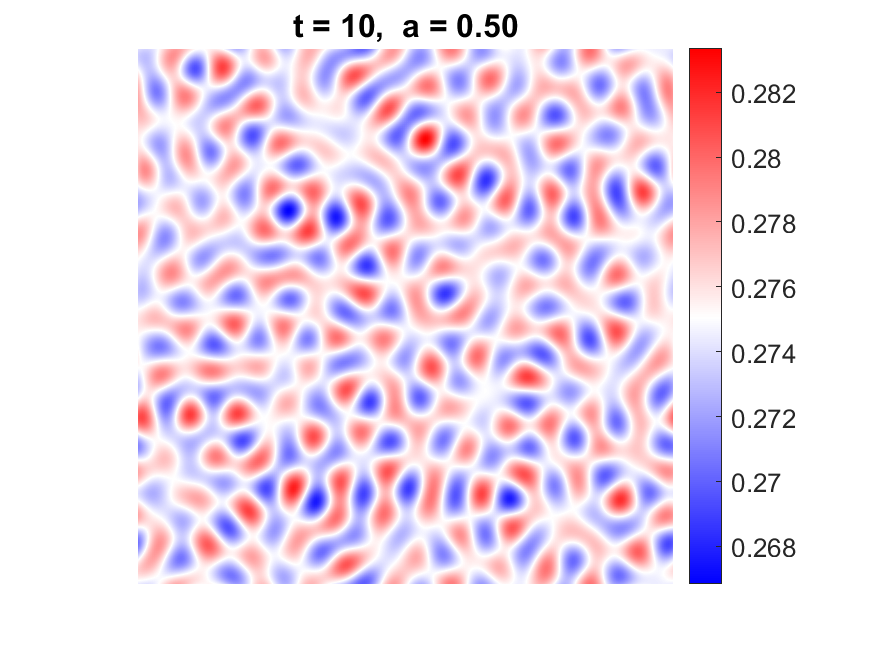} &
\includegraphics[width=0.32\textwidth]{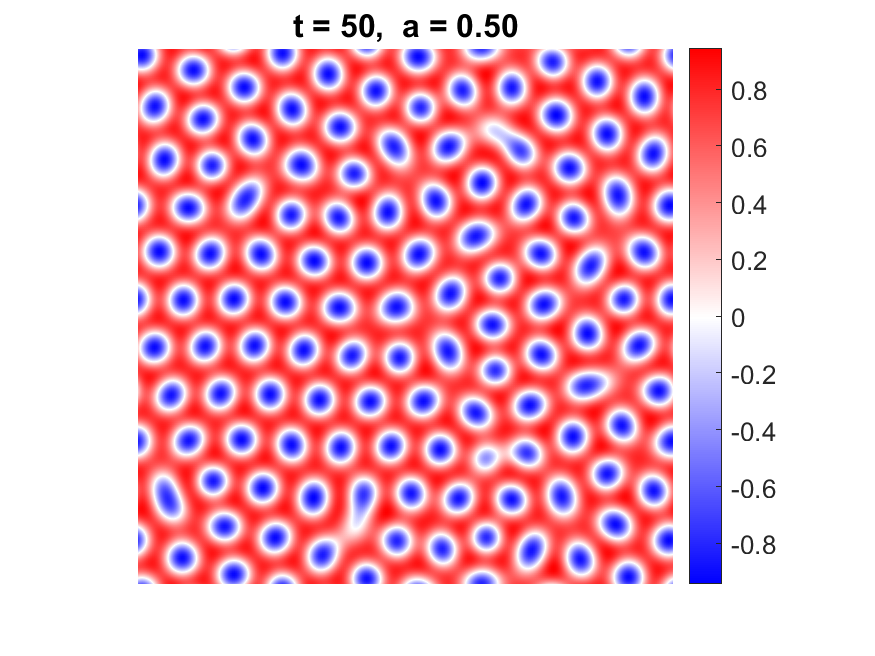} &
\includegraphics[width=0.32\textwidth]{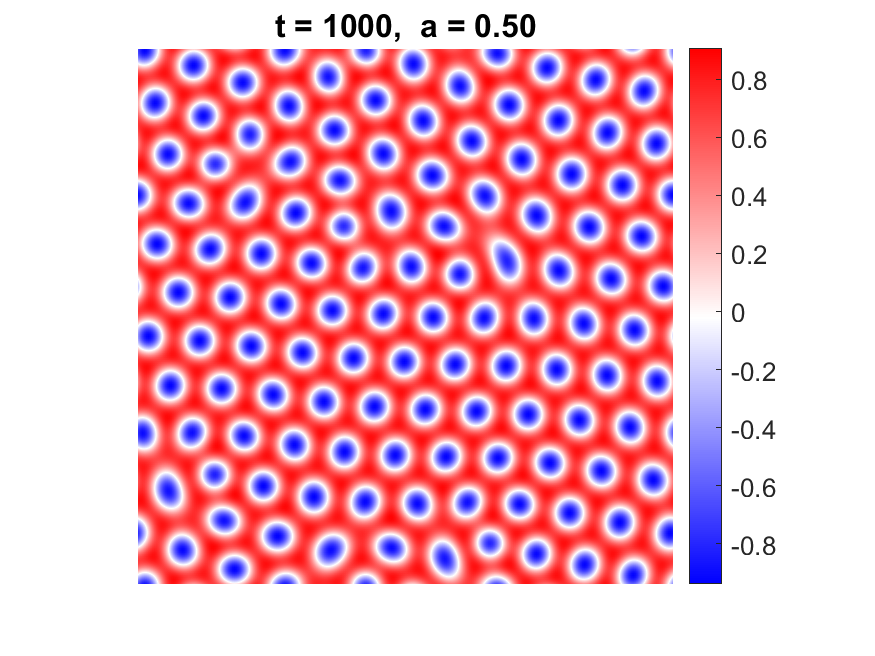}
\end{tabular}
\caption{Hexagonal phase formation for the PFC equation 
with initial condition $\phi^0 = 0.01\,\eta + 0.27$, 
$\eta \sim \mathcal{N}(0,1)$, on a $256\times 256$ 
grid with $r=0.5$, $\Delta t=0.01$, $a=0.5$. 
Snapshots at $t=10$ (left), $t=50$ (center), 
and $t=1000$ (right) demonstrate the progressive 
development of the hexagonal microstructure.}
\label{fig:hex_2D}
\end{figure}

\begin{figure}[h!]
\centering
\includegraphics[width=0.75\textwidth]{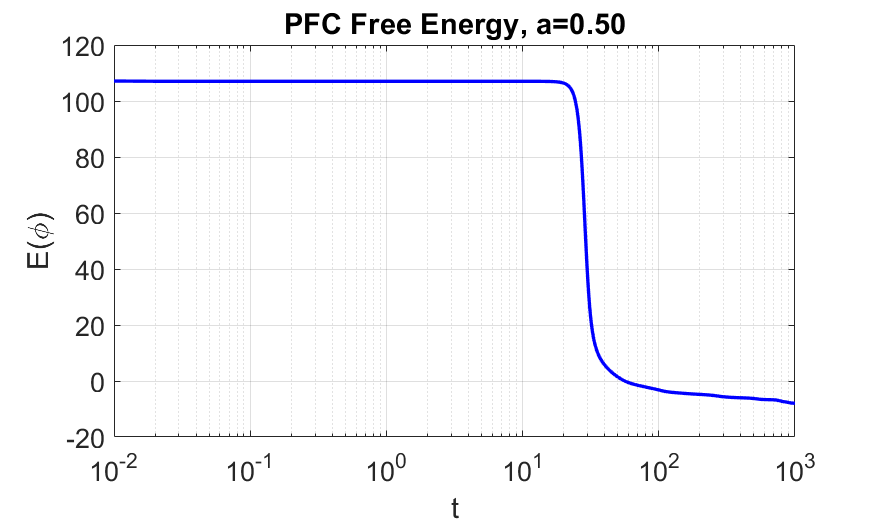}
\caption{Free energy $E(\phi^n)$ as a function of time 
for the hexagonal phase simulation of 
Figure~\ref{fig:hex_2D}, on a $256\times 256$ grid 
up to $T=1000$. The energy decreases monotonically, 
confirming energy stability.}
\label{fig:energy_hex}
\end{figure}

\subsection{Polycrystalline Growth and Grain Boundary Dynamics}

To demonstrate the ability of the proposed scheme 
to capture polycrystalline growth and grain boundary 
dynamics, we initialize three seed crystals with 
different orientations within a $256\times 256$ 
computational domain with $r=0.25$, $q=0.66$, 
$\Delta t = 0.01$, and $a=0.5$. Each seed is 
initialized using the standard crystal nucleus 
configuration ~\cite{OrizagaJCP} 
with a distinct rotation angle, inducing 
crystallographic misorientation between the 
growing grains. As the three crystals grow and 
impinge upon one another, well-defined grain 
boundaries and dislocation defects emerge at 
the interfaces, as shown in 
Figure~\ref{fig:grain_boundary}. The free energy 
decreases monotonically throughout, consistent 
with Theorem~\ref{thm:energy}.

\begin{figure}[h!]
\centering
\centerline{
\begin{tabular}{ccc}
\includegraphics[width=0.34\textwidth]{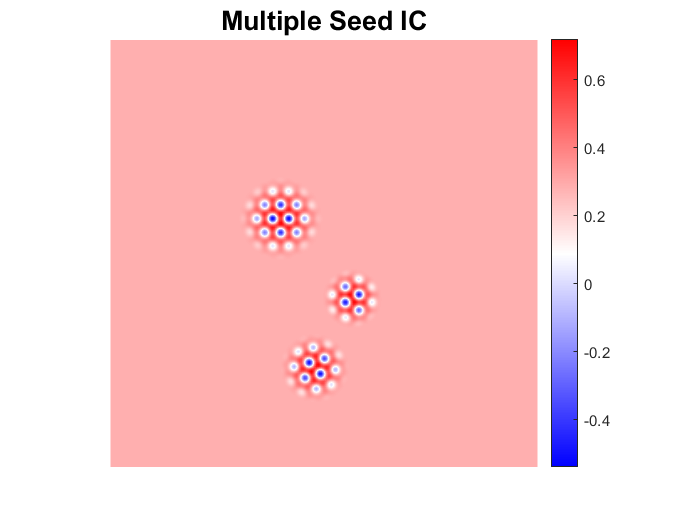} &
\includegraphics[width=0.34\textwidth]{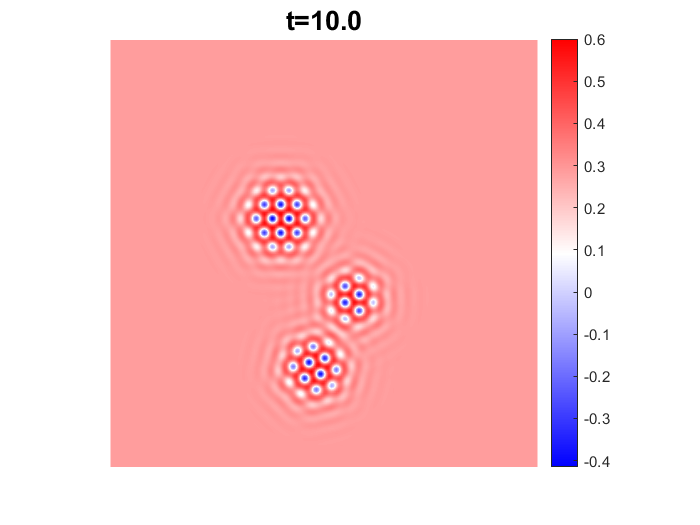} &
\includegraphics[width=0.34\textwidth]{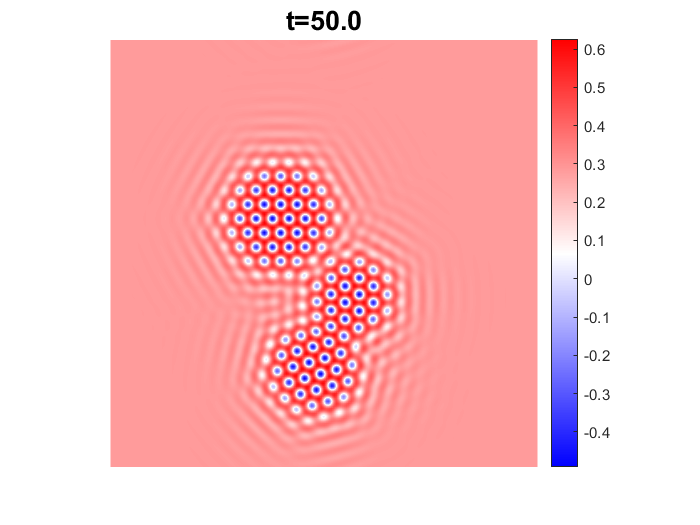} \\[-0.5em]
\includegraphics[width=0.34\textwidth]{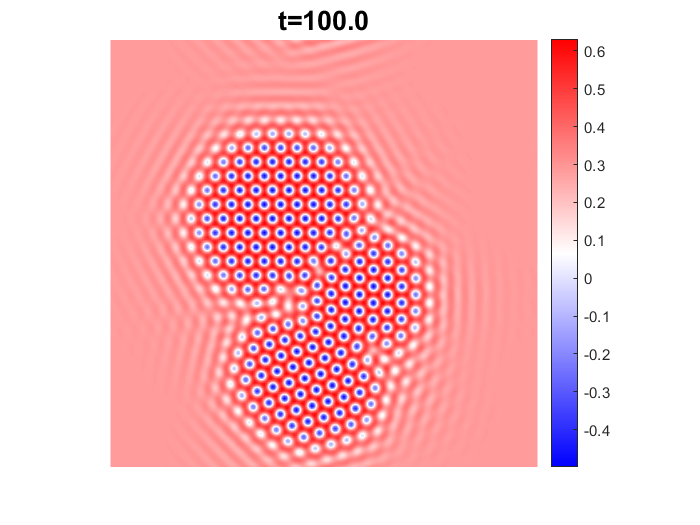} &
\includegraphics[width=0.34\textwidth]{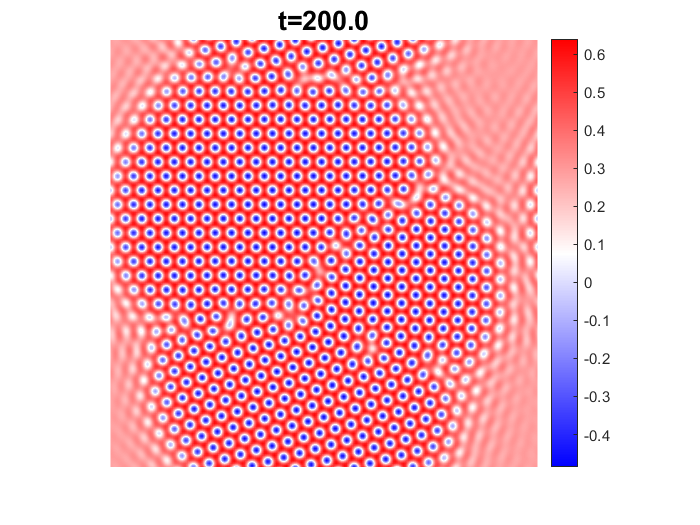} &
\includegraphics[width=0.34\textwidth]{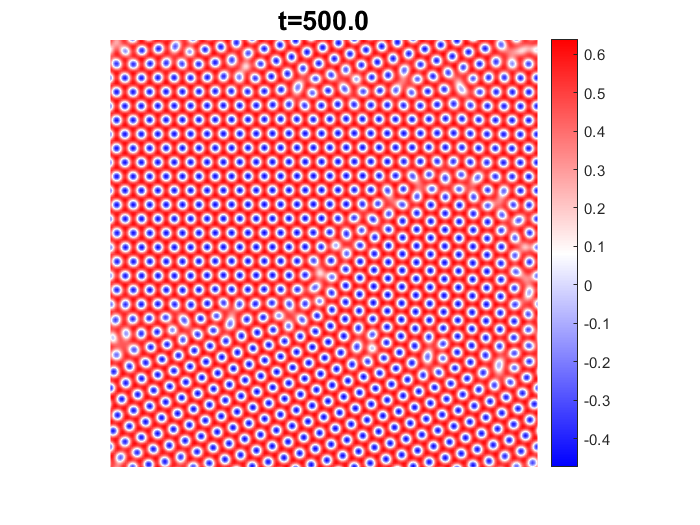}
\end{tabular}
}
\caption{Polycrystalline growth and grain boundary 
dynamics for the PFC equation with three seed crystals 
of different orientations, on a $256\times 256$ grid 
with $r=0.25$, $q=0.66$, $\Delta t=0.01$, $a=0.5$. 
Snapshots at $t=0$, $t=10$, $t=50$ (top row) and 
$t=100$, $t=200$, $t=500$ (bottom row) demonstrate 
the growth of individual crystal grains and their shared 
formation of grain boundaries, as in~\cite{OrizagaJCP}.}
\label{fig:grain_boundary}
\end{figure}


\subsection{Quantitative Grain Boundary Characterization}
\label{sec:grain_quant}

Figure~\ref{fig:grain_boundary} qualitatively demonstrates the
emergence of grain boundaries between three misoriented crystal seeds. Next, we quantify the coarsening dynamics using four types of diagnostic approaches, which were
computed directly from the simulation: the topological defect count
$N_d(t)$, the grain boundary length $L_{GB}(t)$, the grain boundary
excess free energy $E_{GB}(t)$, and the crystallized area fraction
$X(t)$. Results from this anlaysis is presented in Figure \ref{fig:coarsening_diagnostics}.

\begin{figure}[h!]
\centering
\centerline{
\begin{tabular}{cc}
\includegraphics[height=5.5cm]{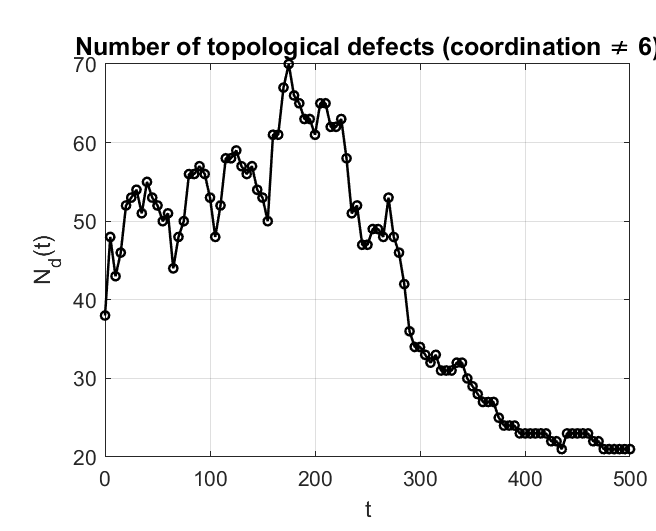} &
\includegraphics[height=5.5cm]{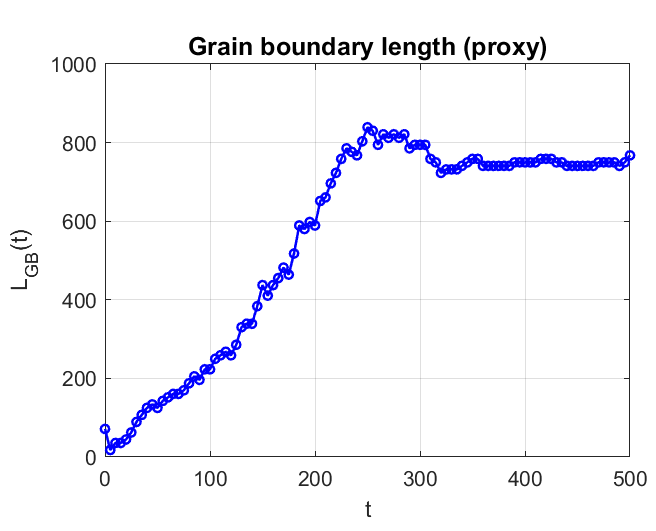} \\[-0.5em]
\includegraphics[height=5.5cm]{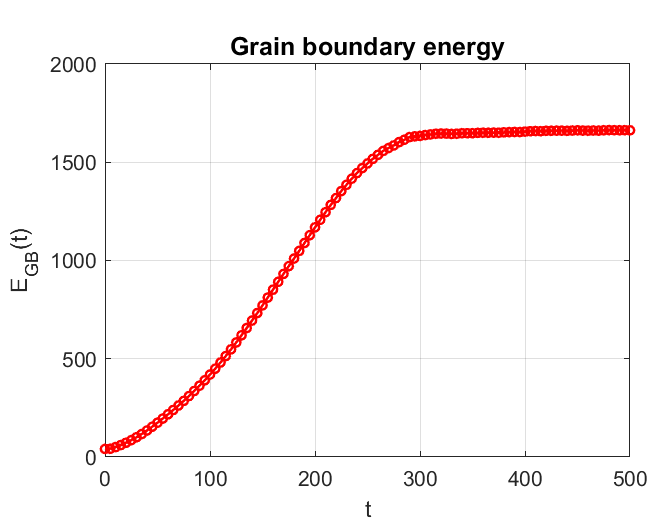} &
\includegraphics[height=5.5cm]{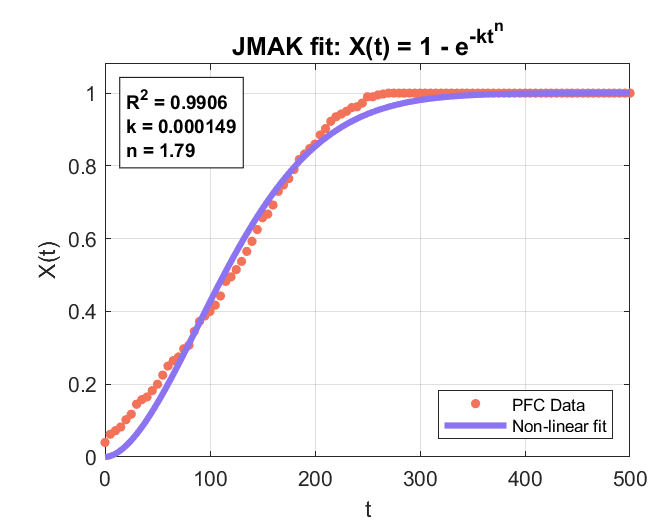}
\end{tabular}
}
\caption{Coarsening diagnostics for the three-grain simulation of
Figure~\ref{fig:grain_boundary}: topological defect count $N_d(t)$,
Eq.~\eqref{eq:Nd} (top left); grain boundary length proxy $L_{GB}(t)$,
Eq.~\eqref{eq:LGB} (top right); grain boundary excess free energy
$E_{GB}(t)$, Eq.~\eqref{eq:EGB} (bottom left); and crystallized area
fraction $X(t)$, fit to the classical Avrami/JMAK form
$X(t)=1-\exp(-kt^n)$ via nonlinear least squares over the full time
series ($k=0.000149$, $n=1.79$, $R^2=0.9906$) (bottom right). All four
quantities plateau at $t^{*}\approx 280$, marking the onset of a
pinned, quasi-steady polycrystalline configuration.}
\label{fig:coarsening_diagnostics}
\end{figure}

\textit{Topological defect count:}
We locate the local extrema of $u(\mathbf{x},t)$, corresponding to the
atomic lattice sites, and construct a Delaunay triangulation over this
point set. In a defect-free triangular lattice every interior site has
coordination number six, so
\begin{equation}
N_d(t) \;=\; \sum_{i \,\in\, \Omega_{\mathrm{int}}} \mathbb{1}\!\left[\, z_i(t) \neq 6 \,\right],
\label{eq:Nd}
\end{equation}
where $z_i(t)$ denotes the Delaunay coordination number of site $i$, $\mathbb{1}[\cdot]$ is the indicator function that equals $1$ if the enclosed condition is satisfied and $0$ otherwise, and $\Omega_{\mathrm{int}}$ consists of all lattice sites located at least one lattice spacing away from the domain boundary.

We emphasize that $N_d(t)$ aggregates all anomalous-coordination sites
across the domain; meaning, \eqref{eq:Nd} captures deviations from ideal coordination number equal to six. Comparing with Figure \ref{fig:grain_boundary}, such deviations appear at the crystal/liquid interfaces (e.g., $t$=100.0), and grain boundary regions (e.g., $t$=500.0).

The results from $N_d(t)$ \textit{v.s.} $t$ analysis is presented in the top-left panel of Figure \ref{fig:coarsening_diagnostics}. Broadly, this plot shows three distinct regions: An initial rise between $0 \leq t \leq 200$, monotonic decrease from $t \approx 200$ to $400$, and a near steady-state till the end of simulation, i.e., $t = 500$. The initial rise to $t \approx 200$ corresponds to increasing fraction of anomalously coordinated atoms at crystal/liquid interface during growth (while largely surrounded by the liquid phase), and those at the grain boundaries formed by the impinging crystallites (e.g., see panel corresponding to $t=100$ in Figure \ref{fig:grain_boundary}. Next, as the liquid is continually consumed by growing crystals, the anomalously coordinated atom population is increasingly dominated by those at the grain boundaries.  For example, the $t=200$ snapshot indicated that the microstructure is dominated by grain boundaries in the second stage. Interestingly, it appears that such grain boundary atoms were in smaller quantity than those at the crystal/liquid interface. Furthermore, our analysis suggested that microstructure did not stop evolving even after complete crystallization, e.g. to $t=400$. (Crystallization kinetics captured from our simulations will be discussed latter). However, beyond this point, the grain boundaries appear to be "locked in" an equilibrium state, which is reflected as a steady state in the plot.

\textit{Interface length:}
We further characterized these interfaces by computing local orientation, $\theta(\mathbf{x},t)$, as a function of time. This quantity was obtained from the
dominant spectral peak of a sliding-window Fourier transform of
$u(\mathbf{x},t)$, taken modulo $60^{\circ}$. Writing
$\zeta(\mathbf{x},t) = \exp[i\,6\,\theta(\mathbf{x},t)]$ to avoid the
wrap-around artifact of this periodicity,
\begin{equation}
L_{GB}(t) \;=\; \int_{\Omega} H\!\left(\, \tfrac{1}{6}\left|\nabla \zeta(\mathbf{x},t)\right| - \eta \,\right) \, d\mathbf{x},
\label{eq:LGB}
\end{equation}
with $\eta = 5^{\circ}$ per grid step, is a coarse but reproducible
proxy for the grain-boundary arclength. 

The top-right plot in Figure \ref{fig:coarsening_diagnostics} shows the result of our analysis. We find that the interfacial length steadily increases till $t\approx250$, and, subsequently, acquires a steady-state value. A small peak was also noted within $200\leq t \leq 300$. It was either caused by the transition to a grain boundary dominated microstructure or is an imprint of the numerical procedure employed in this analysis. Notwithstanding, the initial rise in $L_{GB}(t)$  contains contributions from the increasing circumference of the growing crystals and the proto grain boundaries, e.g. see snapshots corresponding to $t=50$ and $100$ in Figure \ref{fig:grain_boundary}. On the other hand, the steady-state is primarily dominated by the grain boundaries after $t\approx300$.

\textit{Interfacial excess free energy:}
With bulk reference $f_{\mathrm{bulk}}(t) = \mathrm{median}_{\mathbf{x}\in\Omega}\, f(u(\mathbf{x},t))$,
\begin{equation}
E_{GB}(t) \;=\; \int_{\Omega} \Big[\, f\big(u(\mathbf{x},t)\big) - f_{\mathrm{bulk}}(t) \,\Big]^{+} \, d\mathbf{x}
\label{eq:EGB}
\end{equation}
computes the excess free energy  of interfaces, and this approach was inspired by Cahn-Hilliard-based formulations\cite{cahn1958free, granasy2000cahn1,galenko2024hodograph,hasan2025metastable}. Plot in the bottom-right panel of Figure \ref{fig:coarsening_diagnostics} depicts the evolution of  $f_{\mathrm{bulk}}(t)$, and it trends comparable to $L_{GB}(t)$ (top-right panel): a monotonic rise followed by steady state. Therefore, the initial rise correspond to excess energies of crystal/liquid and grain boundary interfaces, while the steady-state reflects contributions from the interfaces formed by the misoriented crystals,i.e., grain boundaries. 

\textit{Crystallization kinetics:}
Lastly, we interrogated the crystallization kinetics by measuring the crystallized fraction ($X(t) \in [0,1]$ ) as function of time - bottom-right panel in Figure \ref{fig:coarsening_diagnostics}. We note that trend in $X(t)$ is comparable to $L_{GB}(t)$ (top-right panel) and $E_{GB}(t)$ (bottom-left panel), and that full crystallization ($X(t)=1)$) was achieved around $t\approx250$. The latter corresponded to microstructure dominated by grain boundary interfaces. Next, we quantified crystallization kinetics by fitting the Johnson–Mehl–Avrami–Kolmogorov (JMAK) equation  to the simulation data \cite{kostorz1983physical,kolmogorov1937statistical}:
\begin{equation}
X(t) = 1 - \exp(-k t^n)
\label{eq:JMAK}
\end{equation}
where, $k$ is a measure of the speed of liquid$\xrightarrow{}$solid (crystal) transformation, and the exponent $n$ signifies the dimensionality of transformation. $n\approx3$ corresponds to three dimensional volumetric transformation, while $n\approx2$ points to a transformation mechanism that is dominated by interfaces \cite{kostorz1983physical,kolmogorov1937statistical,hasan2024transformation}. The fitting results presented in the plot, and, importantly, we find that $n=1.79$ - close to a value of two. This further confirms that the microstructural evolution observed in Figure \ref{fig:grain_boundary} is dominated by interfaces; first by the crystal/liquid, and, after complete crystallization, by crystal/crystal grain boundary interfaces.

Having established the scheme's ability to capture and quantify
mesoscale polycrystalline dynamics, we next examine its performance
at the atomistic length scales for which the PFC framework was
originally intended.


\subsection{Atomistic Resolution: High-Resolution Crystal Growth}

The PFC equation was originally derived to model
microstructure evolution at atomic length 
scales~\cite{Elder,ElderGrant2004}. As demonstrated 
in the comprehensive review of Gránásy et 
al.~\cite{GRANASY2019100569}, phase-field crystal 
models require sufficient spatial resolution to 
faithfully capture crystal nucleation, grain 
boundary dynamics, and microstructure evolution 
at the molecular scale, phenomena that are 
inaccessible at coarser resolutions. To demonstrate 
the ability of the proposed scheme to resolve such 
dynamics, we consider high-resolution simulations 
on grids up to $2048\times 2048$, enabling thousands 
of individual crystalline sites to be resolved 
simultaneously and directly capturing the atomistic 
lattice structure characteristic of the model.

To further assess the behavior of the scheme in a strongly
under-resolved stability regime, we perform a simulation on a
$2048\times 2048$ grid using $a=0.25$, well below the classical
convex-splitting threshold $a\geq 2$. Despite this, the computation
remains stable throughout and accurately captures the evolution of
thousands of crystalline sites across the domain.
Figure~\ref{fig:2048} shows the resulting dynamics, illustrating the
formation and coarsening of a hexagonal crystal lattice from a
single seed.

These large-scale simulations highlight that the relaxed stability
conditions identified in the computational analysis remain effective
in practice for long-time, high-resolution atomistic dynamics.
This sets the stage for extending the framework to fully three-dimensional
simulations in the following section.

\begin{figure}[h!]
\centering
\begin{tabular}{cc}
\includegraphics[width=0.43\textwidth]{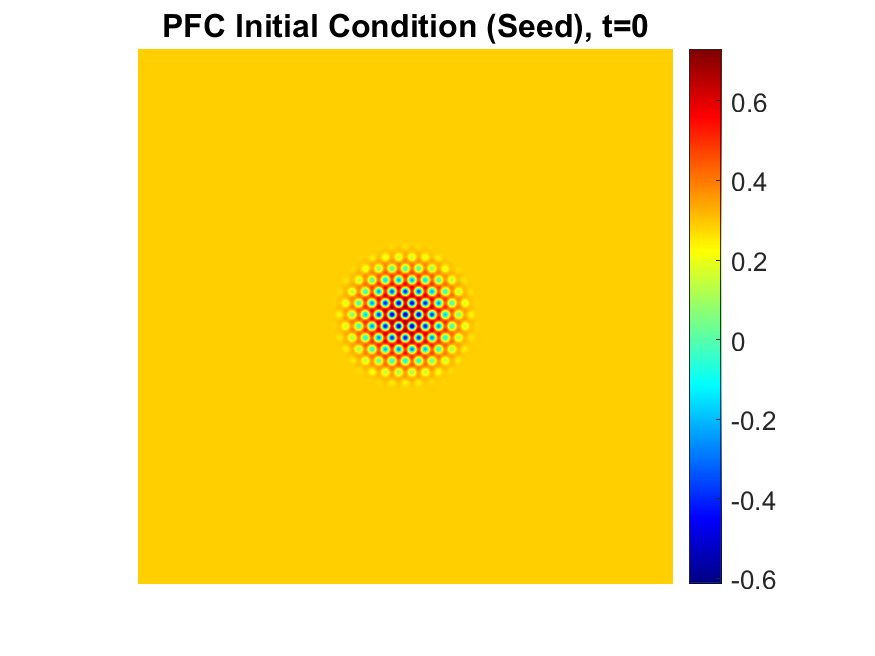} &
\includegraphics[width=0.43\textwidth]{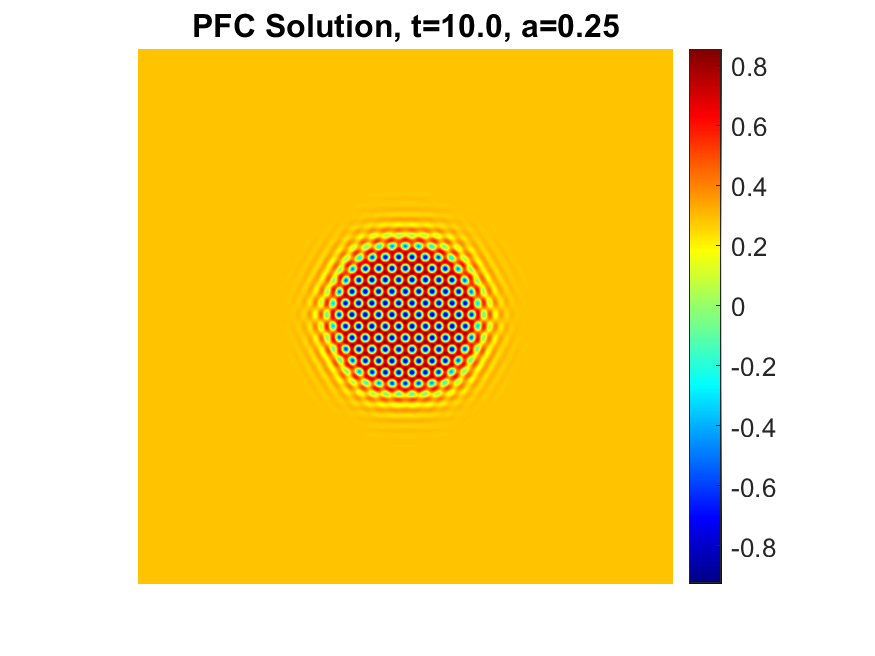} \\
\includegraphics[width=0.43\textwidth]{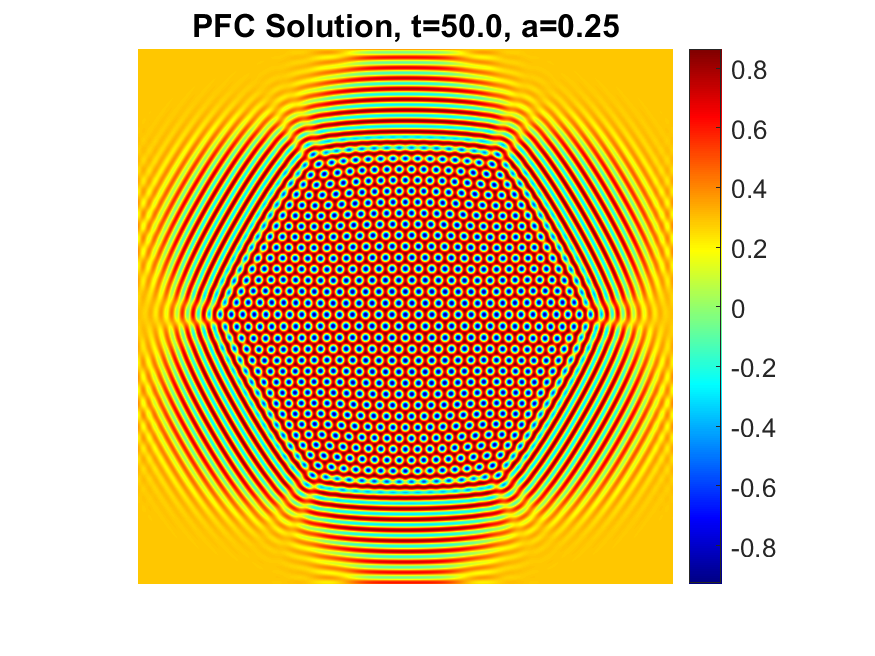} &
\includegraphics[width=0.43\textwidth]{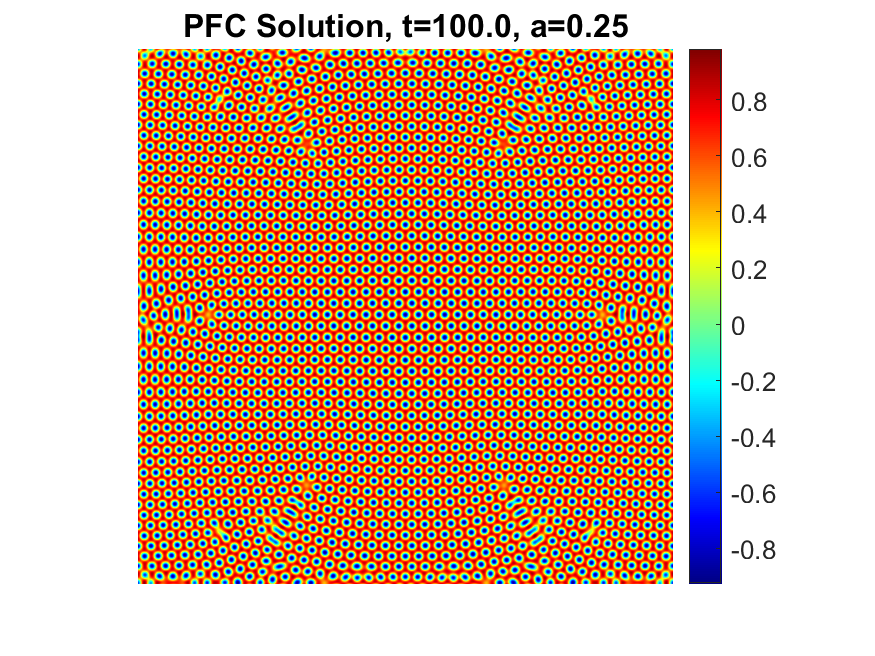}
\end{tabular}
\caption{Crystal growth simulation for the PFC equation 
on a $2048\times 2048$ grid with $a=0.25$, 
$\Delta t = 0.01$, $r=0.5$, well below the classical 
threshold $a\geq 2$. Snapshots at $t=0$ (top left), 
$t=10$ (top right), $t=50$ (bottom left), and $t=100$ 
(bottom right) reveal the progressive growth of the 
hexagonal crystal lattice from a single seed, with 
atomistic-scale microstructure and interfacial wave 
patterns fully resolved at this resolution.}
\label{fig:2048}
\end{figure}

\section{Three-Dimensional Simulations and GPU Scalability}

\subsection{GPU Scalability}

GPU acceleration for spectral phase-field models 
was previously benchmarked in~\cite{Orizaga2024GPU}, 
where an NVIDIA RTX 2060 GPU achieved a conservative 
$8\times$ speedup over an Intel Core i9 CPU baseline 
for the Cahn--Hilliard equation on large-scale problems 
involving $256^3$ grids. GPU-accelerated spectral methods 
for PDEs have also been explored in the context of 
tensor product operations~\cite{ShenGPU} and 
thin-film equations~\cite{KondicGPU}, demonstrating 
the broad applicability of GPU acceleration for 
spectral phase-field computations. In the present 
work, GPU parallelism is further exploited to perform 
large-scale parameter sweeps involving 40,000 
independent PFC simulations, motivated by recent 
advances in GPU-parallelized scientific 
computing~\cite{chowdhury2026gpu}.

To illustrate the three-dimensional capabilities 
of the proposed scheme, we present a benchmark 
simulation using the standard seed initial condition 
of~\cite{OrizagaJCP} extended to three spatial 
dimensions. The benchmark is designed to reflect 
realistic computational conditions: a main script 
calls the solver, timing begins at initialization, 
and concludes only once the full three-dimensional 
solution is computed and available for visualization 
and exploration. All simulations use $r=0.5$, 
$\Delta t=0.01$, $a=0.5$, and $T=100$, 
corresponding to 10,000 time steps at resolutions 
of $128^3$ and $256^3$, representing approximately 
2 million and 16 million degrees of freedom per 
time step, respectively. Figure~\ref{fig:3D_benchmark} 
shows the isosurface visualization and internal 
cross-sections at the center of the domain, 
revealing the three-dimensional crystal 
microstructure. Mass is conserved to machine 
precision throughout.

\begin{figure}[h!]
\centering
\begin{tabular}{cc}
\includegraphics[width=0.5\textwidth]{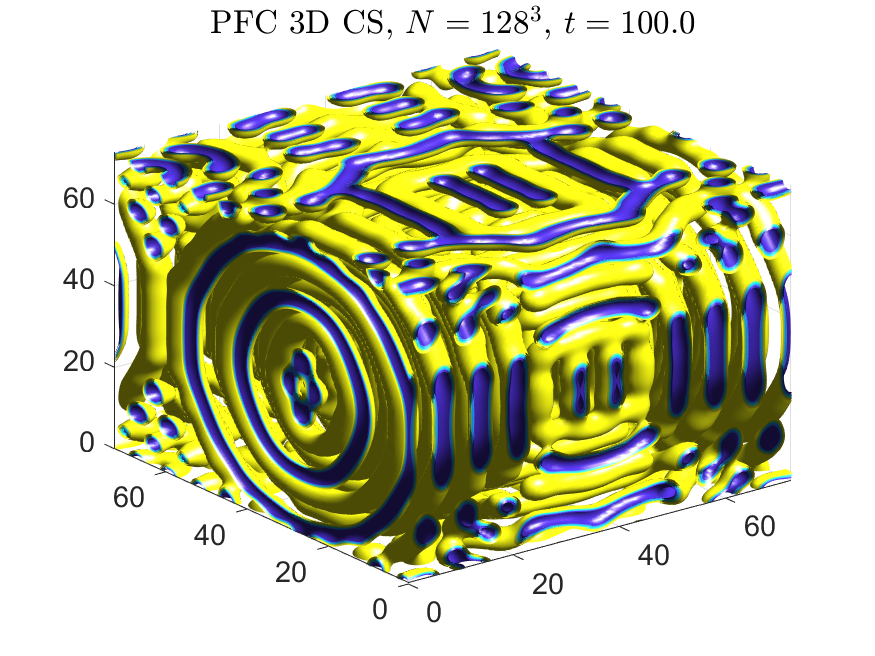} &
\includegraphics[width=0.5\textwidth]{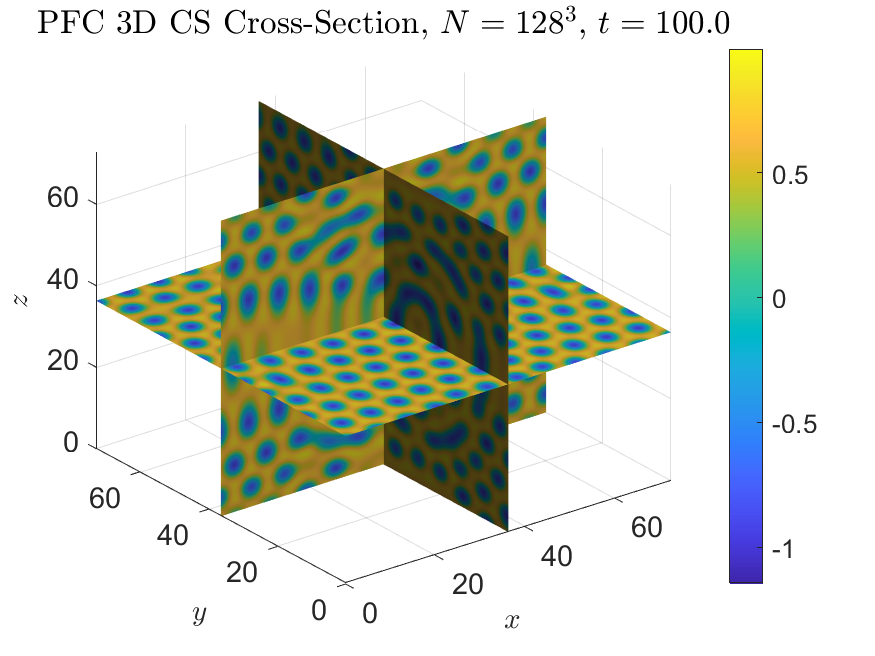}
\end{tabular}
\caption{Three-dimensional PFC simulation using the 
convex splitting spectral method with the seed 
initial condition of~\cite{OrizagaJCP} extended 
to 3D, on a $128^3$ grid with $r=0.5$, 
$\Delta t=0.01$, $a=0.5$, $T=100$. 
Left: isosurface visualization. Right: internal 
cross-sections at the center of the domain.}
\label{fig:3D_benchmark}
\end{figure}

Building on this foundation, we benchmark three 
GPU configurations for the PFC equation using 
the same seed initial condition with $\Delta t = 0.01$ 
and $T = 100$. Table~\ref{tab:gpu} summarizes 
the wall-clock times and relative speedups for 
grid resolutions of $128^3$ and $256^3$.

\begin{table}[h!]
\centering
\begin{tabular}{lcccc}
\toprule
GPU & $128^3$ (min) & $256^3$ (min) & Speedup ($128^3$) & Speedup ($256^3$) \\
\midrule
NVIDIA RTX 2060           & 7.29 & 56.24 & 1$\times$     & 1$\times$ \\
NVIDIA RTX 5090 (laptop)  & 1.42 & 11.82 & 5.13$\times$  & 4.75$\times$ \\
NVIDIA RTX 5090 (desktop) & 0.85 & 6.41  & 8.58$\times$ &  8.77$\times$ \\
NVIDIA RTX PRO 6000 & 0.68 & 4.77  & 10.72$\times$ &  11.79$\times$ \\
\bottomrule
\end{tabular}
\caption{Wall-clock times (in minutes) and GPU speedups for 3D PFC 
simulations with the seed initial condition, 
$\Delta t = 0.01$, $T = 100$. Speedups are reported 
relative to the RTX 2060 baseline. The RTX 2060 
achieves an $8\times$ speedup over a CPU baseline, 
as established in~\cite{Orizaga2024GPU}.}
\label{tab:gpu}
\end{table}

The proposed CS scheme is benchmarked on modern 
NVIDIA Blackwell GPU architectures to demonstrate 
scalability for large-scale 3D PFC simulations. 
Table~\ref{tab:gpu} reports wall-clock times for 
consumer and professional-grade GPUs. Given the 
$8\times$ speedup of the RTX 2060 over a CPU 
baseline~\cite{Orizaga2024GPU}, the RTX 5090 
desktop achieves an effective speedup of 
approximately $70\times$ over CPU, approaching 
the $80\times$ reported for the professional 
NVIDIA A100 in~\cite{Orizaga2024GPU}. The 
professional-grade RTX PRO 6000 reaches $94\times$, 
confirming that the gap between consumer and 
professional hardware is closing for phase-field 
workloads. These results demonstrate that 
large-scale 3D PFC simulations are now accessible 
on consumer hardware without HPC infrastructure.

The RTX 5090 laptop achieves an effective speedup 
of approximately $40\times$ over CPU, comparable 
to the professional A6000 reported 
in~\cite{Orizaga2024GPU}, demonstrating that 
high-resolution 3D PFC simulations are now 
feasible on portable consumer hardware.

\subsection{3D GPU-Accelerated PFC Morphologies}

The proposed scheme naturally captures a rich 
variety of three-dimensional microstructures 
depending solely on the mean-field value of 
the initial condition, without requiring 
prescribed seed configurations. Specifically, 
we consider initial conditions of the form
\[
\phi^0 = 0.01\,\eta + \bar{\phi}, 
\qquad \eta \sim \mathcal{N}(0,1),
\]
where $\bar{\phi}$ denotes the mean density. 
By varying $\bar{\phi}$, three qualitatively 
distinct phases emerge naturally from random 
initial conditions, demonstrating the 
morphological versatility of the convex 
splitting spectral framework. All simulations 
are performed on $256^3$ and $512^3$ grids with 
$r=0.5$, $\Delta t=0.01$, $a=1.0$, and $T=1000$ 
using the consumer-grade RTX 5090 desktop GPU.

\subsubsection{Lamellar Phase ($\bar{\phi}=0$)}

Setting $\bar{\phi} = 0$, the system develops 
a three-dimensional labyrinthine stripe pattern 
distributed throughout the computational domain. 
The fine-scale structure reflects the atomistic 
length scale of the PFC equation, producing 
richly textured microstructure distinct from 
the coarser interfaces observed in classical 
phase-field models. Figure~\ref{fig:3D_stripes} 
shows the isosurface  
at $T=1000$, and Figure~\ref{fig:3D_stripes_energy} 
shows the corresponding free energy.

\begin{figure}[h!]
\centering
\includegraphics[width=0.71\textwidth]{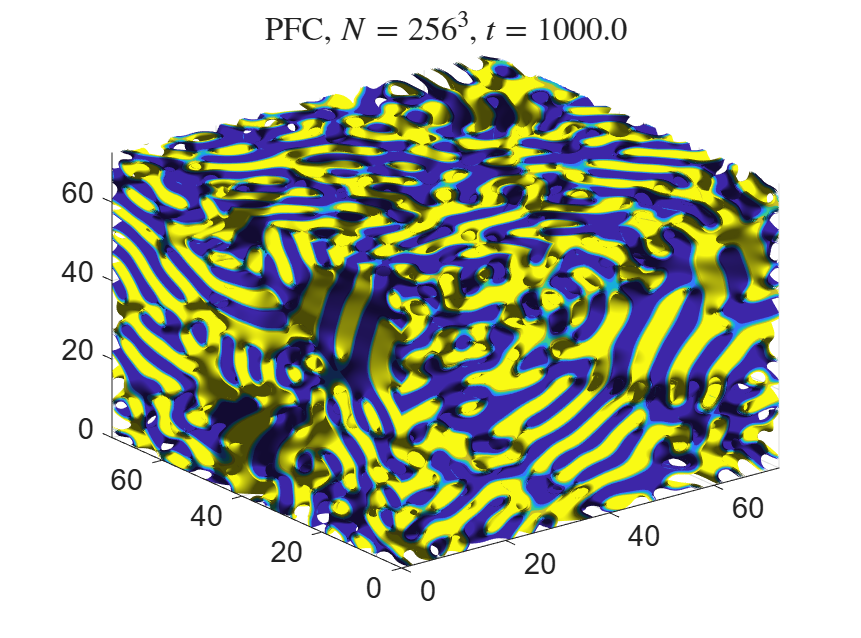} 
\caption{Lamellar phase for the PFC equation with 
$\bar{\phi}=0$, on a $256^3$ grid with $r=0.5$, 
$\Delta t=0.01$, $a=1.0$, $T=1000$.}
\label{fig:3D_stripes}
\end{figure}

\begin{figure}[h!]
\centering
\includegraphics[width=0.7\textwidth]{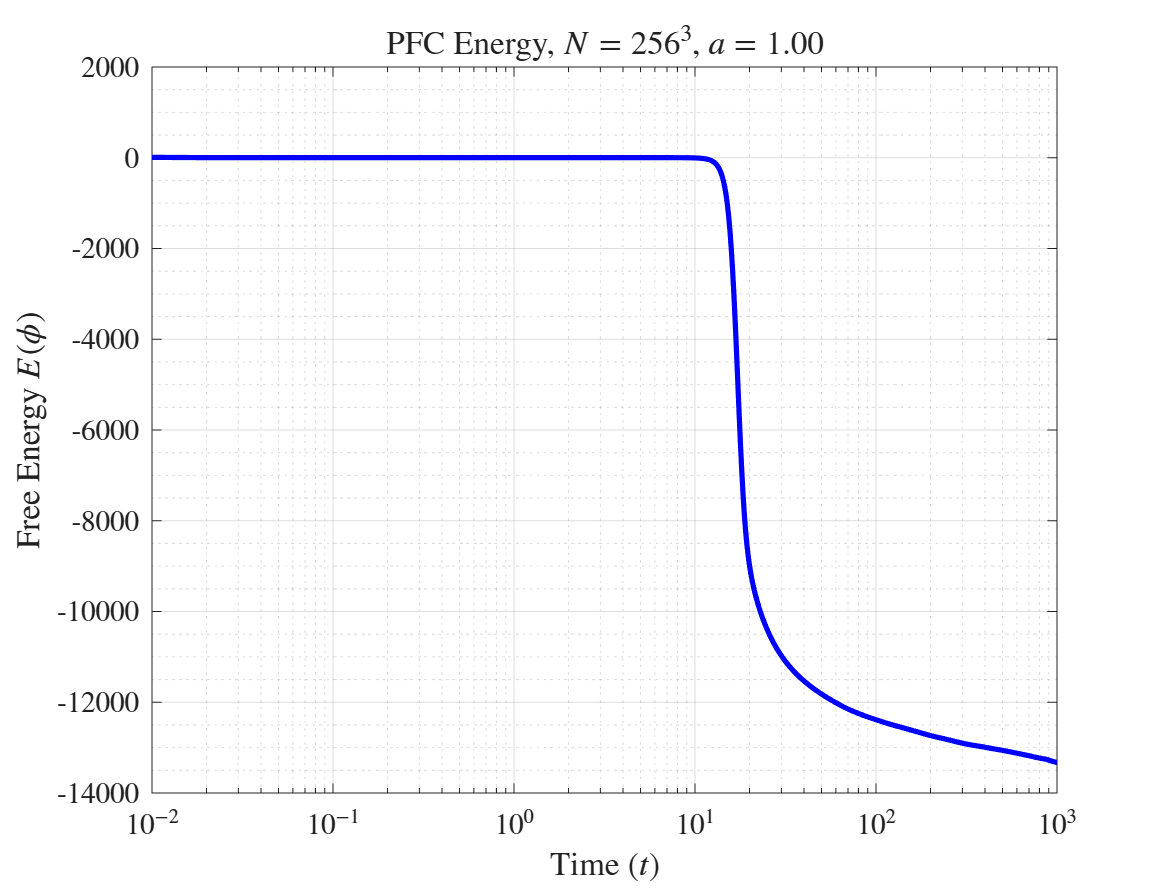}
\caption{Free energy $E(\phi^n)$ as a function of 
time for the lamellar phase simulation of 
Figure~\ref{fig:3D_stripes}, confirming monotonic 
energy dissipation consistent with 
Theorem~\ref{thm:energy}.}
\label{fig:3D_stripes_energy}
\end{figure}

\subsubsection{Spherical Phase ($\bar{\phi}=0.35$)}

Setting $\bar{\phi} = 0.35$, the higher mean 
density favors the formation of isolated spherical 
microdomains distributed throughout the 
computational domain. Unlike the classical 
Cahn--Hilliard equation where spherical domains 
undergo Ostwald ripening, the PFC equation 
stabilizes the spherical phase through its 
atomistic structure, maintaining well-defined 
spherical microdomains throughout the simulation. 
Figure~\ref{fig:3D_spheres} shows the isosurface 
at $T=1000$, and Figure~\ref{fig:3D_spheres_energy} 
shows the corresponding free energy, which decreases 
monotonically consistent with Theorem~\ref{thm:energy}.

\begin{figure}[h!]
\centering
\includegraphics[width=0.71\textwidth]{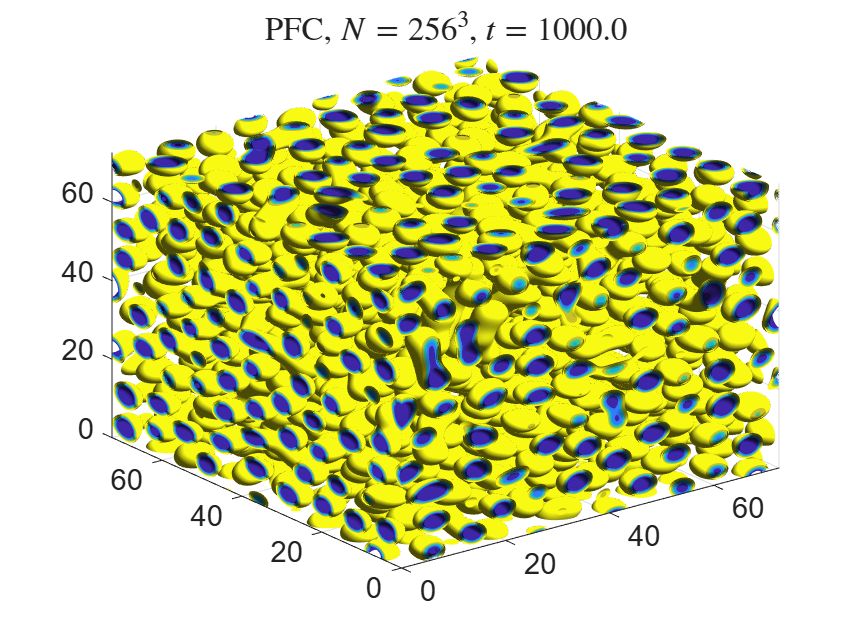}
\caption{Spherical phase for the PFC equation with 
$\bar{\phi}=0.35$, on a $256^3$ grid with $r=0.5$, 
$\Delta t=0.01$, $a=1.0$, $T=1000$.}
\label{fig:3D_spheres}
\end{figure}

\begin{figure}[h!]
\centering
\includegraphics[width=0.7\textwidth]{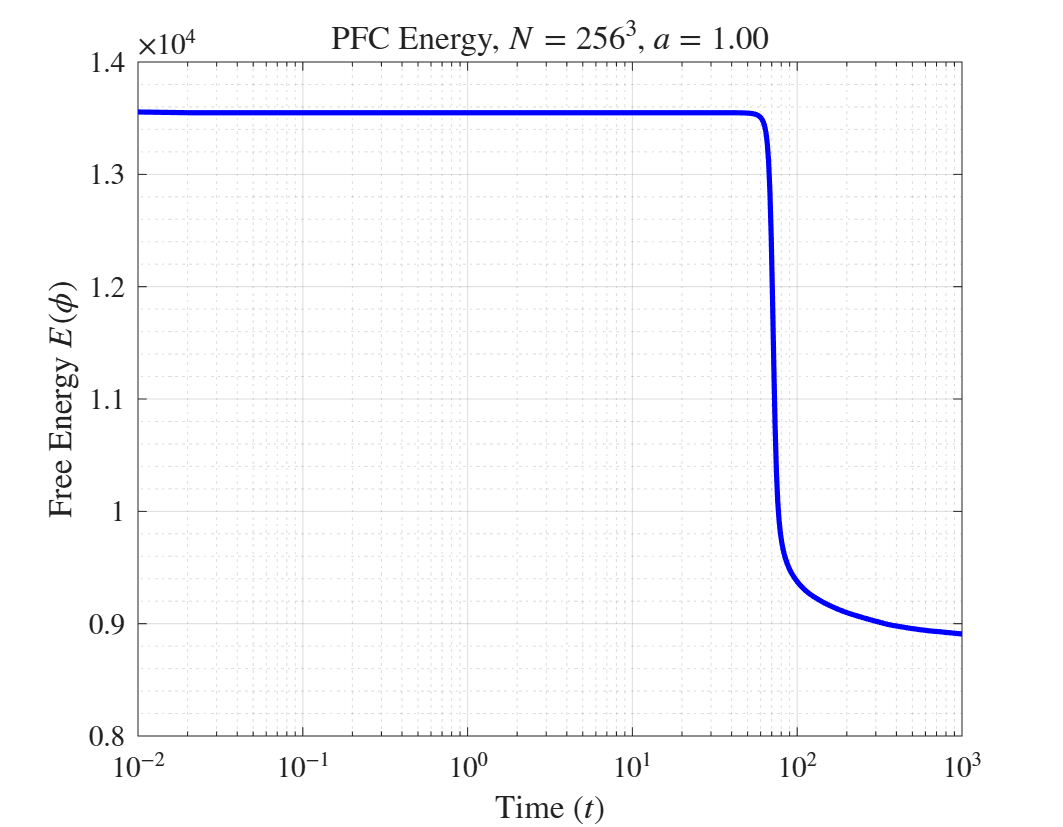}
\caption{Free energy $E(\phi^n)$ as a function of 
time for the spherical phase simulation of 
Figure~\ref{fig:3D_spheres}, confirming monotonic 
energy dissipation consistent with 
Theorem~\ref{thm:energy}.}
\label{fig:3D_spheres_energy}
\end{figure}

\subsubsection{Hexagonal Cylindrical Phase ($\bar{\phi}=0.25$)}

To demonstrate the scalability of the proposed 
framework, we consider an intermediate mean density 
$\bar{\phi} = 0.25$, which produces a hexagonal 
cylindrical phase characterized by interconnected 
tubular microstructures. To fully capture the 
atomistic activity at this scale, we scale up 
to a $512^3$ grid, corresponding to approximately 
134 million degrees of freedom. This resolution 
reveals the rich three-dimensional tubular network 
that emerges from the PFC dynamics, providing 
a stringent test of both the energy stability 
and the GPU scalability of the proposed scheme.
Figure~\ref{fig:3D_hex} shows the isosurface 
and Figure~\ref{fig:3D_hex_energy} shows the 
corresponding free energy.

\begin{figure}[h!]
\centering
\includegraphics[width=0.68\textwidth]
{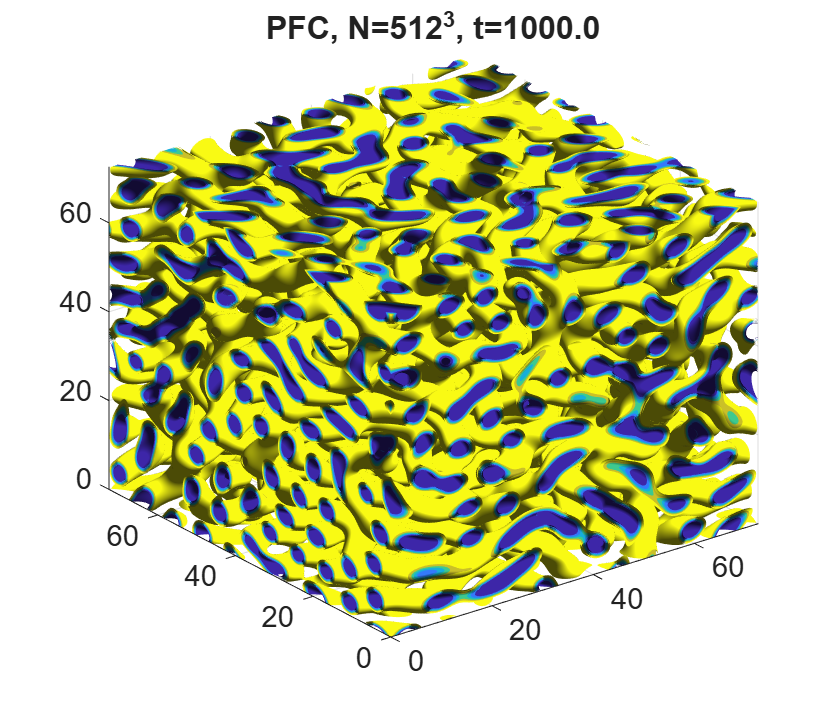}
\caption{Hexagonal cylindrical phase for the PFC 
equation with $\bar{\phi}=0.25$, on a $512^3$ grid 
with $r=0.5$, $\Delta t=0.01$, $a=1.0$, $T=1000$.}
\label{fig:3D_hex}
\end{figure}

\begin{figure}[h!]
\centering
\includegraphics[width=0.7\textwidth]{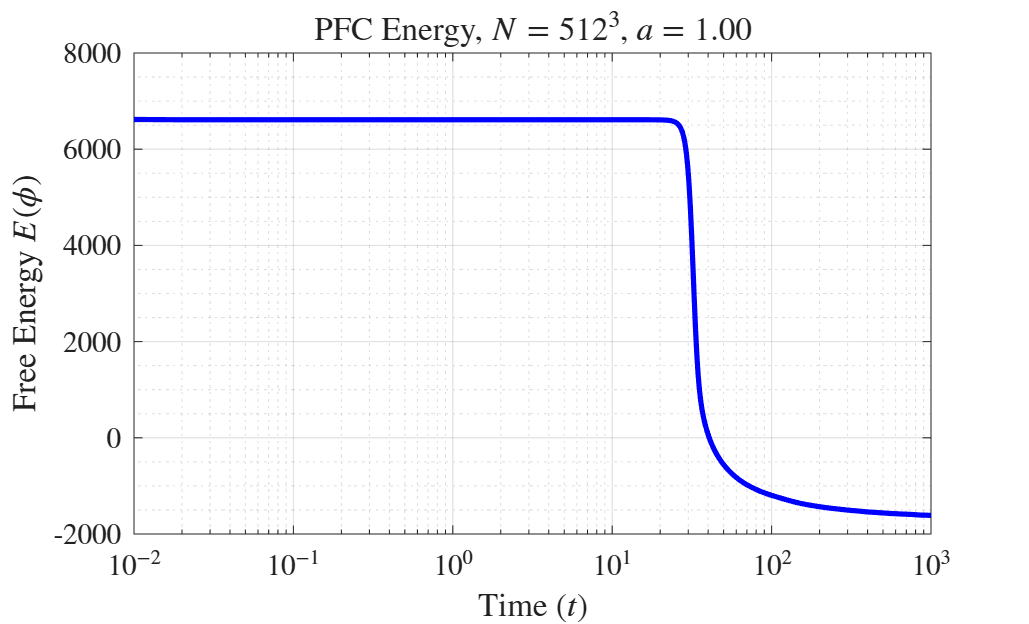}
\caption{Free energy $E(\phi^n)$ as a function of 
time for the hexagonal cylindrical phase simulation 
of Figure~\ref{fig:3D_hex}.}
\label{fig:3D_hex_energy}
\end{figure}


\begin{remark}[GPU Acceleration]
Each $256^3$ simulation presented above runs 
to $T=1000$ with $\Delta t=0.01$, corresponding 
to 100,000 time steps on a grid of $256^3 = 
16{,}777{,}216$ degrees of freedom, over 
16 million unknowns updated at every step. 
On a CPU baseline, such a computation would 
require approximately $94 \times 70 \approx 
6{,}580$ minutes, or over 4.5 days, on a 
single core without HPC infrastructure. 
All of this was completed in under 
\textbf{94 minutes} on a single consumer 
NVIDIA RTX 5090 desktop GPU.
\end{remark}

\begin{remark}[GPU Acceleration at $512^3$]
The $512^3$ hexagonal cylindrical phase simulation 
runs to $T=1000$ with $\Delta t=0.01$, corresponding 
to 100,000 time steps on a grid of $512^3 = 
134{,}217{,}728$ degrees of freedom. On a CPU 
baseline, such a computation would require 
approximately 18 days on a single core without 
HPC infrastructure. On a single consumer GPU, 
this simulation completed in under 6 hours.
\end{remark}

\begin{remark}
Every simulation in this work, including the 40,000 parametric sweeps and the three-dimensional computations, is executed strictly in native double-precision (FP64). As demonstrated in~\cite{Orizaga2024GPU}, single-precision (FP32) introduces an unphysical mass leakage of $\mathcal{O}(10^{-4})$ over long temporal scales. Maintaining full FP64 precision across all GPU configurations guarantees mass conservation to machine precision ($\mathcal{O}(10^{-14})$), ensuring that the reported speedups reflect genuine algorithmic efficiency under uncompromised physical rigor.
\end{remark}

\subsection{Asymptotic Energy Behavior: 
Lamellar Phase at $T=10{,}000$ 
($10^6$ Time Steps)}

To demonstrate the long-time robustness of the 
proposed scheme, we extend the lamellar phase 
simulation to $T=10{,}000$ with $\Delta t=0.01$, 
corresponding to $10^6$ time steps on a $256^3$ 
grid with $\bar{\phi}=0$, $r=0.5$, with $a=1.0$, a value below the classical 
threshold $a \geq 2$ but satisfying the 
derived neutral stability curve~\eqref{eq:neutral}. 
Figure~\ref{fig:longrun_stripes} shows the 
isosurface of the lamellar phase at $T=10{,}000$, 
revealing well-developed three-dimensional lamellar 
stripes. Figure~\ref{fig:longrun_energy} 
shows the corresponding free energy on a log scale, 
demonstrating monotonic dissipation and convergence 
to a well-defined asymptotic plateau, demonstrating monotonic dissipation and convergence toward an asymptotically stable energy plateau. Mass is conserved to machine precision 
throughout all $10^6$ time steps.

\begin{figure}[h!]
\centering
\includegraphics[width=0.7\textwidth]{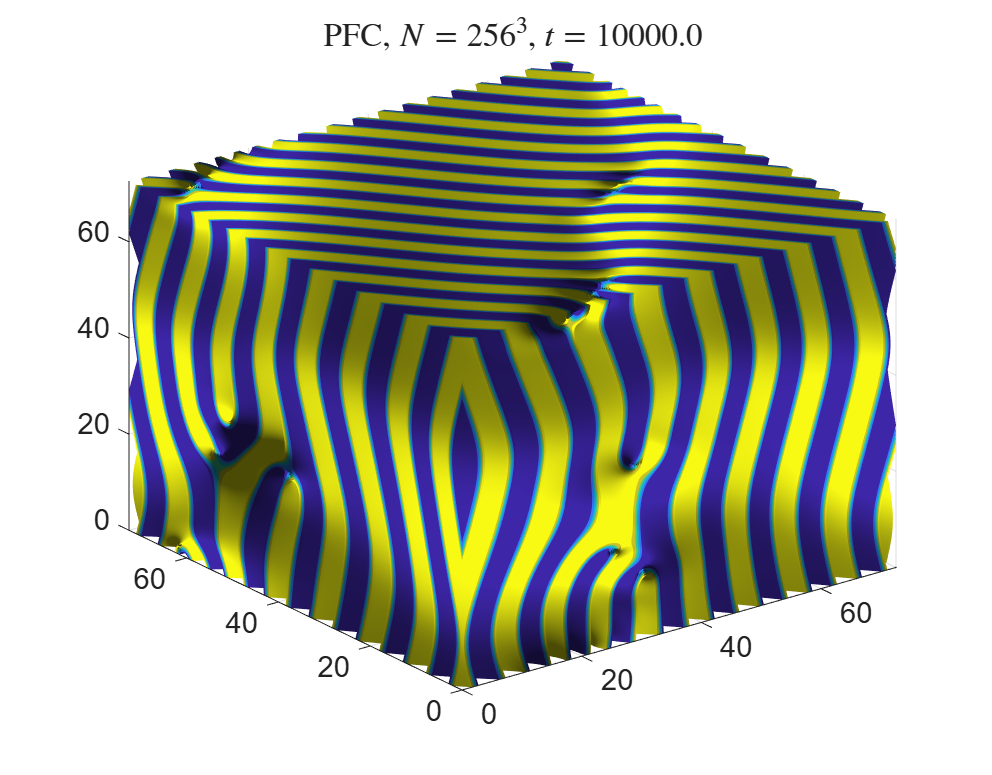}
\caption{Lamellar phase isosurface at $T=10{,}000$ 
for the PFC equation with $\bar{\phi}=0$, on a 
$256^3$ grid with $r=0.5$, $\Delta t=0.01$, 
$a=1.0$.}
\label{fig:longrun_stripes}
\end{figure}

\begin{figure}[h!]
\centering
\includegraphics[width=0.70\textwidth]{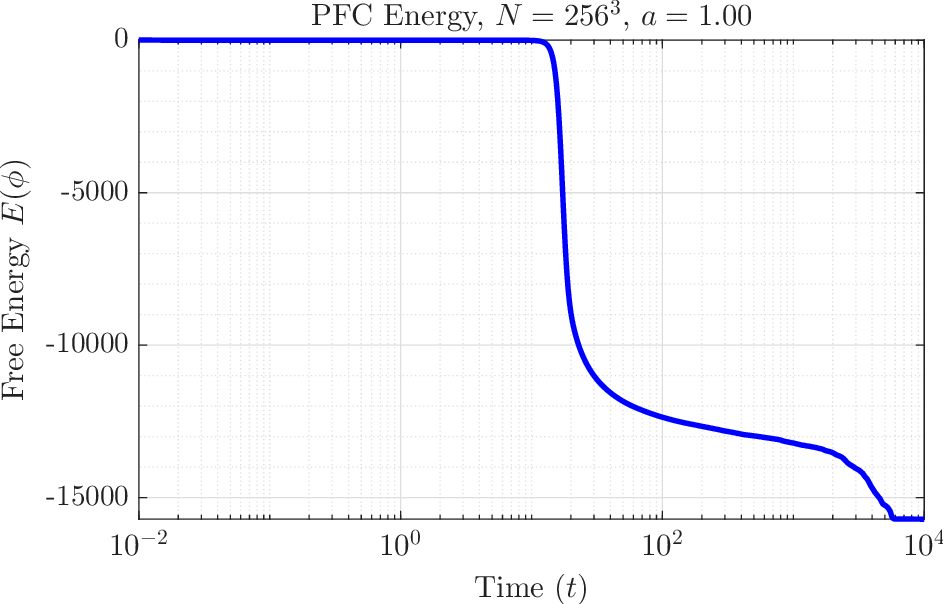}
\caption{Free energy $E(\phi^n)$ as a function of 
time on a log scale up to $T=10{,}000$ 
($10^6$ time steps), demonstrating monotonic dissipation and convergence toward an asymptotically stable energy plateau.}
\label{fig:longrun_energy}
\end{figure}



\section{Conclusions}

We have presented an efficient Fourier spectral method 
based on the convex splitting framework for the phase 
field crystal equation. The proposed first-order scheme 
is unconditionally energy stable, conserves mass to 
machine precision, and admits a closed-form spectral 
update requiring a few lines of MATLAB code.

A central contribution of this work is a computational 
stability analysis of the splitting parameter $a$, 
performed via GPU-accelerated parameter sweeps involving 
40,000 independent PFC simulations. Our results 
demonstrate that the classical sufficient condition 
$a \geq 2$ is not sharp: a semi-analytical neutral 
stability curve derived from a dominant-mode energy 
balance provides a closed-form characterization of 
the practical stability boundary, lying well below 
the classical bound. Moreover, accuracy analysis 
across multiple initial conditions consistently 
demonstrates that smaller values of $a$ within the 
stable region yield lower $L^2$ errors while 
preserving the energy decreasing property, 
providing a practical guideline for parameter selection.

Two-dimensional simulations on $256 \times 256$ 
 and high resolution grids demonstrate the efficiency and morphological 
richness of the proposed framework, including 
crystal growth from a single seed, lamellar and 
hexagonal structures emerging from random initial 
conditions, and grain boundary dynamics from 
polycrystalline configurations. Three-dimensional 
GPU-accelerated simulations at resolutions of 
$128^3$, $256^3$, and $512^3$ on a single consumer 
GPU further demonstrate the scalability of the 
method. Three qualitatively distinct phases emerge 
naturally in three dimensions from random initial 
conditions by varying only the mean density 
$\bar{\phi}$, suggesting that the proposed 
framework is robust for exploring the rich 
morphological landscape of the PFC equation 
in three spatial dimensions. Long-time three-dimensional simulations up to $T=10,000$ ($10^6$ time steps) further demonstrate robustness in the relaxed stability regime predicted by the derived neutral stability curve. These large-scale FP64 computations maintain monotonic energy dissipation and mass conservation to machine precision throughout the entire evolution.

Future work includes the extension of this framework 
to second-order temporal schemes via BDF2--CS and IMEX methods~\cite{OrizagaJCP,gomez2012unconditionally,SongCH}, noting that 
the first-order temporal discretization adopted in the present 
work is a deliberate methodological choice rather than a 
limitation of the underlying convex splitting framework: as 
demonstrated in~\cite{OrizagaJCP}, second-order accuracy via 
BDF2--CS is readily achievable. We note, however, that the 
practical stability threshold for $a$ identified here is specific 
to the first-order temporal discretization analyzed in this work; 
characterizing how this threshold shifts under BDF2--CS or other 
higher-order schemes is a natural and nontrivial direction for 
future investigation, 
adaptive time-stepping strategies guided by the 
neutral stability curve, and applications to 
related phase-field models including the block copolymer 
systems~\cite{OhtaKawasaki1986,bcpOrizaga,Orizagasys}. 
Extensions to thin-film equations with similar 
interfacial dynamics also represent a natural 
direction for future investigation~\cite{TOMtf25,glasnertom,OrizagaTom}. 
Moreover, we plan to extend this framework to 
higher-order phase-field crystal models, including 
the eighth-order PFC equation~\cite{Jaatinen2009,Jaatinen2010}, 
the Lifshitz--Petrich model for quasicrystalline 
structures~\cite{HU2025109439,HU2026129914}, 
which captures additional crystallographic detail 
beyond the standard sixth-order formulation.

The quantitative grain boundary characterization introduced in
Section~\ref{sec:grain_quant} is directly relevant to materials
science applications, and we plan to pursue further work in this
direction, including more detailed defect and grain-growth statistics. Our GPU-based implementation of PFC further paves way for coupling with atomistic simulations, e.g., using \textit{ab initio }and classical molecular dynamics (MD). These MD-based techniques are capable of describing complex structures, e.g., multi-phase interfaces, organic-inorganic molecules, and the like, and their time-evolution through well-established mathematical formulations and interatomic potentials \cite{choudhuri2020interface, choudhuri2022investigation, choudhuri2026untangling, hasan2025metastable,hasan2024transformation}. However, they are limited to probing mechanisms at short time scales, e.g., within pico-nanoseconds, only. On the other hand, PFC simulations can extend to microseconds, and, therefore, can probe physical phenomenon in materials that may be inaccessible to MD. A possible pathway for such multi-temporal coupling is using MD simulations to inform  parameters in the energy functional, e.g. $r$  and $a$ in equations. 3 and 5, which is crucial for PFC simulations. Recent developments in Physics-informed neural networks (PINNs) may allow such information transfer, where PINNs can be trained on MD data while being constrained with a physics-based mathematical function \cite{choudhuri2026untangling,chowdhury2025uncovering}.


The GPU-parallelized parameter sweep strategy
adopted here, motivated by recent advances in
atom-ion dynamics~\cite{chowdhury2026gpu},
is broadly applicable to phase-field and
gradient-flow models requiring systematic
stability or sensitivity analysis across
large parameter spaces. Many modern numerical 
frameworks, including higher-order convex 
splitting, SAV, and IMEX-type schemes, introduce 
stabilization or splitting parameters whose 
practical stability regions invite further 
computational exploration. The present
GPU-parallel stability-map methodology provides
a scalable and reproducible framework for
systematically exploring such parameter landscapes
through large-scale PDE simulations on consumer
hardware.

\section*{Acknowledgments}
This research was supported by an Institutional 
Development Award (IDeA) from the National Institute 
of General Medical Sciences of the National Institutes 
of Health under grant number P20GM103451. S.O 
acknowledges the University of New Mexico (UNM) for 
providing MATLAB licensing and computational resources 
used in this work. S.O.\ thanks Maurice Fabien from 
MIT for the use of a professional GPU. P. Yin's 
research was supported by the University of Texas at 
El Paso Startup Award. D. Choudhuri's research was 
supported by the National Science Foundation under 
Award No. 2333630.

\section*{Declaration of Competing Interests}
The authors declare that they have no known competing 
financial interests or personal relationships that 
could have appeared to influence the work reported 
in this paper.

\section*{Data and Code Availability}

In the interest of full scientific 
reproducibility, all source scripts are hosted on a dedicated GitHub repository 
at \url{https://github.com/sauloorizaga/CS-PFC} and will be made publicly 
accessible upon the formal acceptance of this manuscript.

\end{document}